\newtheorem{theorem}{\sc \textbf{Theorem}}[section]  
\newtheorem{proposition}[theorem]{\sc \textbf{Proposition}}   
\newtheorem{corollary}[theorem]{\sc \textbf{Corollary}}        
\newtheorem{lemma}[theorem]{\sc \textbf{Lemma}}
\theoremstyle{remark}
\newtheorem{definition}[theorem]{\sc \textbf{Definition}}
\def\cM{{\mathcal M}}
\def\cR{{\mathcal R}}
\numberwithin{equation}{section}
\newcommand{\Lpmuw}{L_\mu^p(w)}
\title[$A_p$ weights on nonhomogeneous trees]{$A_p$ weights on nonhomogeneous trees equipped with measures of exponential growth}
\author[A. Ottazzi]{Alessandro Ottazzi}
\address[A. Ottazzi]{School of Mathematics and Statistics \\  University of New South
Wales \\ 2052 Sydney  \\  Australia}
\email{a.ottazzi@unsw.edu.au}
\author[F. Santagati]{Federico Santagati}
\address[F. Santagati]{Dipartimento di Matematica e Applicazioni \\   Universit\`a degli Studi di Milano-Bicocca  \\ Via Cozzi 55 \\ 20125 Milano \\ Italy}
\email{federico.santagati@unimib.it}
\author[M. Vallarino]{Maria Vallarino}
\address[M. Vallarino]{Dipartimento di Scienze Matematiche ``Giuseppe Luigi Lagrange'' \\  Politecnico di Torino \\ Corso Duca degli Abruzzi 24 \\ 10129 Torino \\ Italy}
\email{maria.vallarino@polito.it}
\thanks{Santagati and Vallarino are members of the Project ``Harmonic analysis on continuous and discrete structures'' (bando Trapezio Compagnia di San Paolo CUP E13C21000270007) and of the Gruppo Nazionale per l'Analisi Matema\-tica, la Probabilit\`a e le loro Applicazioni (GNAMPA) of the Istituto Nazionale di Alta Matematica (INdAM)}
\keywords{$A_p$ weights, trees, exponential growth, Muckenhoupt Theorem}
\subjclass[2020]{05C05, 05C21, 43A99}
\begin{document}  \maketitle
\begin{abstract}
 This paper aims to study $A_p$ weights in the context of a class of metric measure spaces with exponential volume growth, namely infinite trees with root at infinity equipped with the geodesic distance and flow measures. Our main result is a Muckenhoupt Theorem, which is a characterization of the weights for which a suitable Hardy--Littlewood maximal operator is bounded on the corresponding weighted $L^p$ spaces.  We emphasise that this result does not require any geometric assumption on the tree or any condition on the flow measure. We also prove a reverse H\"older inequality in the case when the flow measure is locally doubling. We finally show that the logarithm of an $A_p$ weight is in BMO and discuss the connection between $A_p$ weights and quasisymmetric mappings.
 \end{abstract}

\section{Introduction}
The study of $A_p$ weights originated with Muckenhoupt's seminal work \cite{Mu1}, which initially characterized these weights in $\mathbb{R}^n$ in terms of the boundedness of the Hardy–Littlewood maximal operator from the weighted $L^p$ space to itself. Subsequently, Coifman and Fefferman introduced the fundamental concept of the reverse H\"older inequality \cite{CF} and Calderón extended these results to spaces of homogeneous type  \cite{Ca}. Moreover, $A_p$ weights were further characterized in $\mathbb{R}$ by linking them to the boundedness of the Hilbert transform in weighted $L^p$ spaces, as shown in \cite{HMW}. This line of inquiry led to a comprehensive theory regarding the boundedness of Calderón-Zygmund operators in weighted $L^p$ spaces; we refer to \cite{Muckenhoupt1974, GCRdF, Du, DO, Gra} and the references therein for more information on the topic.   \\ 
\indent This work focuses on trees, which are infinite connected graphs with no loops. It is important to note that a tree, when equipped with the usual geodesic distance and the counting measure, is a nondoubling metric measure space in most cases.  In this context, Muckenhoupt's characterization of $A_p$ weights fails. Indeed, on the one hand, the $A_p$ condition may be too weak, given that on some trees the Hardy--Littlewood maximal operator can be unbounded on every $L^p$ for $p<\infty$ \cite{LMSV}. On the other hand, it can be too strong since there are other examples of trees on which the $A_p$ condition is not necessary for the boundedness of the maximal operator \cite{OR-MathAnn}. For this reason, we introduce different measures on trees, the {\it flow measures}, where we will be able to prove a Muckenhoupt's characterization of $A_p$ weights. Before providing the details of our results,
some preliminaries are in order. \\ 
Let $T$ be a tree and $d$ denote the geodesic distance. We say that two vertices $x,y \in T$ are neighbours if and only if $d(x,y)=1$. In this case, we write $x \sim y$. Let $\partial T$ be the boundary of $T$ (defined, e.g., as in \cite[Section I.1]{FTN}). We fix a reference point $\omega_*$ in the boundary of $T$ and think of $T$ hanging down from $\omega_*$. This choice induces a partial order $\le$ on $T$, namely, 
$x \le y$ if and only if $y \in [x,\omega_*)$, where $[x,\omega_*)$ denotes the semi-infinite geodesic with endpoints $x$ and $\omega_*$.
Fix a reference vertex $o \in T$ and  let $\{x_j\}_{j \in \mathbb N}$ be an enumeration of $[o,\omega_*)$ such that $x_0=o$ and $d(x_j,x_k)=|j-k|$.
We define the  level function \begin{align}\label{def:lev}\ell(x)=\lim_{j \to \infty} j - d(x,x_j), \qquad \forall x \in T.
\end{align}For every $x \in T,$ let $s(x)$ denote the set of {\it successors} of $x$, i.e,  $$s(x):=\{y \sim x \ : \ell(y)=\ell(x)-1\}$$ and $p(x)$ the {\it predecessor} of $x$, namely, $$p(x):=\{y \sim x \ : \ell(y)=\ell(x)+1\}.$$  We call a positive function $\mu$ on $T$ a flow measure if it satisfies  the flow condition \begin{align}
    \mu(x)=\sum_{y \in s(x)} \mu(y).
\end{align} 
We point out that, for a general flow measure $\mu$, $(T, d, \mu)$ is not a doubling metric measure space and the measure of balls grows at least exponentially (see \cite[Theorem 2.5 and Proposition 2.8]{LSTV}). Since the doubling condition fails in this setting one has to identify a suitable definition of $A_p$ weights. 

Let a flow measure $\mu$ on $T$ be fixed. Inspired by a similar definition given in the setting of homogeneous trees in \cite{HS}, in \cite{LSTV} the authors introduced a family $\mathcal R$ of {\it admissible trapezoids} in $(T,d,\mu)$ (see Section \ref{Sec:2} for their precise definition) and used those sets to replace balls and develop a Calder\'on--Zygmund theory in this setting, at least when $\mu$ is locally doubling. For this reason, it is quite natural to give a definition of $A_p(\mu)$, $p\in [1,\infty)$, on $(T,d,\mu)$ which is similar to the classical one, where balls are replaced by admissible trapezoids. In Section \ref{Sec:2} we study the properties of such weights and prove a result in the spirit of Muckenhoupt Theorem, showing that the Hardy--Littlewood maximal operator $\mathcal M_{\mu}$ associated to the family of admissible trapezoids and the flow measure $\mu$ is bounded on the weighted $L^p_{\mu}(w)$ space if and only if $w\in A_p(\mu)$ when $p\in (1,\infty)$. Similarly, $\mathcal M_{\mu}$ is bounded from $L^1_\mu(w)$ to $L^{1,\infty}_\mu(w)$ if and only if $w\in A_1(\mu)$ (Theorem~\ref{Muckenhoupt-Main}). It is remarkable that for this result we do not require any condition on the geometry of the tree, which may have unbounded degree and even not be locally finite, or on the measure $\mu$ which does not need to be locally doubling. A key ingredient to study the boundedness properties of the operator $\mathcal M_{\mu}$ is the fact that every admissible trapezoid $R$ admits an envelope set $\widetilde R$ which plays a role in a covering lemma and has flow measure comparable with the flow measure of $R$. Unfortunately, $\widetilde R$ is not admissible any longer; this makes quite involved to show that also the weighted measure of $\widetilde R$ and $R$ are comparable for every $A_p(\mu)$ weight (see Theorem \ref{th1}).

\smallskip

In Section \ref{secRevHolder} we restrict our attention to the case when $\mu$ is locally doubling, which implies that the tree has uniformly bounded degree. Under these conditions, we prove a reverse H\"older inequality for $A_p(\mu)$ weights, and we provide a characterization of the class $A_\infty(\mu)$, which we define as the union of all $A_p(\mu)$ spaces as $p>1$. One difficulty that we have to face is that in our setting the measure $w \mu$ is not a flow and is nondoubling for a general weight $w\in A_p(\mu)$, so that a Calder\'on--Zygmund decomposition for such a measure is not available. We tackle this challenge in Proposition~\ref{newCZdec} where, despite the lack of a Calder\'on--Zygmund theory for $w\mu$, we are able to control the averages of a function on admissible trapezoids with respect to such measure. As in the classical setting, we also prove that the space $BMO(\mu)$, defined in this context in \cite{LSTV}, coincides with the space of multiples of logarithms of $A_{\infty}(\mu)$ weights.

\smallskip

In Section~\ref{QSmappings}, we discuss the connection between $A_p(\mu)$ weights and quasisymmetric mappings. A classical theorem of Reimann~\cite{Reimann} states that the Jacobian determinant of a quasiconformal mapping on $\mathbb R^n$ is an $A_p$ weight, which in turn implies that its logarithm is in ${\rm BMO}(\mathbb R^n)$. 
Since Reimann's result was published, several attempts to generalise his results to different metric measure spaces were made, see, e.g., \cite{Korte}. Recently, in \cite{NguyenWard}, the authors proved that the logarithm of the generalised Jacobian of a quasisymmetric mapping on spaces of homogeneous type is always in BMO. Following Reimann's strategy, they first prove that the generalised Jacobian of a quasisymmetric mapping is an $A_p$ weight. The proof relies heavily on the doubling property. 
It turns out that with the geodesic distance on the tree, the (suitably defined) Jacobian of a quasisymmetric mapping is not an $A_\infty(\mu)$ weight, implying that Reimann's result is false in this context. In fact, we will show (Theorem~\ref{iso-notAp}) that for a homogeneous tree equipped with the geodesic distance and the canonical flow measure (see Section \ref{QSmappings} for the precise definition), the Jacobian of an isometry is not an $A_\infty(\mu)$ weight. 
However, if we change the metric to the Gromov distance, we can establish a positive result: the Jacobian of a bilipschitz mapping and its reciprocal belong to $L^\infty$ (see Theorem~\ref{bilipAp}). Moreover, every such mapping induces $(1,C)$-quasi-isometries with respect to the geodesic distance on the tree.
It then makes sense to argue that the link between BMO functions and $A_p(\mu)$ weights in the setting of homogeneous trees, endowed with the geodesic distance, is better represented by (a subset of) quasi-isometric bijections, rather than quasisymmetric mappings. We plan to investigate this question in the future. 

\smallskip

Let us discuss some other open problems related to the theory of $A_p$ weights that will be the object of further investigation. It would be interesting to study a suitable version of a Fefferman-Stein inequality adapted to our setting in the spirit of \cite{Om, Om2, NA} and weighted estimates for integral operators, such as Riesz transforms and spectral multipliers of the flow Laplacian, for which unweighted $L^p$ estimates were obtained in \cite{HS, LMSTV, MSTV, MSV}. Another challenging problem is the study of two-weight inequalities in this context (see \cite{ACLM} for a recent contribution in this direction).

It is also worth mentioning that the discrete setting of trees equipped with flow measures has a natural continuous counterpart given by solvable extensions of stratified Lie groups equipped with flow measures (see \cite{HS, DLMV, MOV, V}). It is then natural to define $A_p$ weights in this context and study a characterization of such weights in terms of the boundedness properties of a suitable maximal operator. 

\bigskip

We will use the variable constant convention and write $C$, possibly with subscripts, to denote a positive constant that may vary from place to place and depend on any factor quantified (implicitly or explicitly) before its occurrence but not on factors quantified afterwards. If two positive quantities $A$ and $B$ are comparable (i.e., $\frac1CB \le A \le CB$ for some $C > 0$), we shall write $A \approx B$. We may add a subscript to $\approx$ to trace the dependence of $C$ on a parameter. 
\section{Admissible trapezoids and $A_p$ weights}\label{Sec:2}  

Throughout this section, we consider an infinite tree $T$ with root $\omega_*$ at infinity and we  assume that 
\begin{itemize}
    \item[$i)$]$\mu$ is a flow measure on $T$ (possibly not locally doubling);
       \item[$ii)$] $w$ is a weight on $T$, i.e., a positive function defined on $T$.
\end{itemize}
    Notice that there are no assumptions on the geometry of $T$. Indeed, $(T,d)$ need not be a locally compact space.

We fix a number $\beta \ge 12$ once and for all and  we define for every $s \ge 0$
\begin{align*}
    \lceil s \rceil:=\min\{j \in \mathbb N : j \ge s\}, \qquad \lfloor s\rfloor:=\max\{j \in \mathbb N : j \le s\}.
\end{align*}

\begin{definition}
A {\it trapezoid} with root $x\in T$ and heights $h_1,h_2\in\mathbb N$, $h_1< h_2$, is defined by 
$$R_{h_1}^{h_2}(x):=\{y \in T \ : \ y \le x, h_1 \le d(x,y)<h_2\}. $$
We point out that \begin{align}\label{eq:measR}\mu(R_{h_1}^{h_2}(x))=(h_2-h_1)\mu(x).\end{align} 
A trapezoid $R=R_{h_1}^{h_2}(x)$ is called admissible if $2\le \frac{h_2}{h_1}\le \beta$, and in such case we define its {\it envelope} as the set  \begin{align}\label{envelope}
    \widetilde{R}:=R_{\lceil h_1/\beta\rceil}^{h_2\beta}(x).
\end{align} 
\end{definition}

We denote by $\mathcal{R}$ the family of all admissible trapezoids and singletons. We shall think of a singleton $\{x\}$ as a trapezoid with root $x$ and we say that the envelope of a singleton is the singleton itself. 

 \smallskip

We now recall some useful facts about admissible trapezoids. For detailed proofs, we refer to \cite{LSTV}.
\begin{lemma}\label{lem:LSTV}Let $R_1,R_2 \in \mathcal R$ with roots $x_1$, $x_2 \in T$, such that $R_1 \cap R_2 \ne \emptyset$ and assume that
$\mu(x_1) \ge  \mu(x_2)$. Then,
$R_2 \subset \widetilde{R}_1$.
\end{lemma}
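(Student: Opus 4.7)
The plan is to first exploit the intersection hypothesis to reduce to the structural configuration $x_2\le x_1$, and then verify the inclusion $R_2\subset\widetilde R_1$ by direct integer bookkeeping on $d(y,x_1)$ for $y\in R_2$.

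First I would observe that any $z\in R_1\cap R_2$ satisfies $z\le x_1$ and $z\le x_2$, so both $x_1$ and $x_2$ lie on the single geodesic $[z,\omega_*)$ and are therefore $\le$-comparable. Iterating the flow identity $\mu(x)=\sum_{y\in s(x)}\mu(y)$ shows that $\mu$ is non-decreasing along paths towards $\omega_*$, so the hypothesis $\mu(x_1)\ge\mu(x_2)$ forces $x_2\le x_1$. Setting $k:=d(x_1,x_2)\ge 0$ and writing $R_i=R_{a_i}^{b_i}(x_i)$ with admissibility $2a_i\le b_i\le\beta a_i$, every $y\in R_2$ satisfies $y\le x_2\le x_1$ and hence
\[
d(y,x_1)=d(y,x_2)+k\in[\,a_2+k,\,b_2+k-1\,].
\]
The intersection point $z$ provides the integer constraints $a_2+k\le d(z,x_1)\le b_1-1$, yielding $k\le b_1-1-a_2$ and $a_2\le b_1-1$, together with $a_1\le d(z,x_1)<b_2+k$, yielding $k>a_1-b_2$.

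The upper bound on $d(y,x_1)$ is then immediate: using $b_2\le\beta a_2$ and $a_2\le b_1-1$,
\[
d(y,x_1)\le b_2+k-1\le(\beta-1)a_2+b_1-2\le(\beta-1)(b_1-1)+b_1-2=\beta b_1-\beta-1<\beta b_1.
\]
The lower bound requires a case split. If $a_2\ge\lceil a_1/\beta\rceil$, then $d(y,x_1)\ge a_2\ge\lceil a_1/\beta\rceil$ directly. Otherwise $a_2\le\lceil a_1/\beta\rceil-1$, which (after tracing the definition of the ceiling) forces $\beta a_2<a_1$; the strict integer inequality $k>a_1-b_2\ge a_1-\beta a_2>0$ then gives $k\ge a_1-\beta a_2+1$, and hence
\[
d(y,x_1)\ge a_2+k\ge a_1-(\beta-1)a_2+1>\frac{a_1}{\beta}+1\ge\lceil a_1/\beta\rceil,
\]
where the middle inequality uses $(\beta-1)a_2<(\beta-1)a_1/\beta$. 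The singleton subcases are handled by direct inspection: when one of the sets is a point, the intersection hypothesis trivialises the inclusion.

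The main obstacle is the lower bound. Without the dichotomy on $a_2$ versus $\lceil a_1/\beta\rceil$, the intersection-derived information $k>a_1-b_2$ carries no content in the regime $a_1\le b_2$, and the remaining regime $a_1>\beta a_2$ is precisely where the integer strictness of the constraint must be exploited to push $a_2+k$ up to $\lceil a_1/\beta\rceil$.
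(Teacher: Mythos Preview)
The paper does not give its own proof of this lemma; it simply records the statement and refers to \cite{LSTV} for the argument. Your direct verification---reducing to $x_2\le x_1$ via comparability along $[z,\omega_*)$ and the monotonicity of $\mu$, then bounding $d(y,x_1)=d(y,x_2)+k$ by integer bookkeeping with the admissibility constraints---is exactly the natural approach and is carried out correctly. The upper bound and the dichotomy on $a_2$ versus $\lceil a_1/\beta\rceil$ for the lower bound are both clean.

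One small caveat worth flagging: the step ``$\mu(x_1)\ge\mu(x_2)$ forces $x_2\le x_1$'' uses that $\mu$ is \emph{strictly} increasing towards $\omega_*$, which can fail on a path of vertices each having a single successor (where $\mu$ is constant). In that degenerate situation the lemma as literally stated can fail as well (take $T=\mathbb Z$ with $\mu\equiv 1$), so this is not a defect of your argument but of the formulation; for the covering application one only needs that \emph{one} of the two inclusions $R_2\subset\widetilde R_1$ or $R_1\subset\widetilde R_2$ holds, and in the equal-measure case one may simply choose the labelling so that $x_2\le x_1$. With that reading your proof is complete.
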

 As a consequence of Lemma \ref{lem:LSTV},  the {\it noncentred} Hardy--Littlewood maximal operator $\mathcal{M}_\mu$ defined on a function $f$ on $T$ by 
\begin{align*}
    \mathcal{M}_\mu f(x)=\sup_{R \in \mathcal{R}\hspace{0.05cm}:\hspace{0.05cm}x\in R} \frac{1}{\mu(R)} \sum_{y \in R} |f(y)| \mu(y) \qquad \forall x \in T,
\end{align*} is bounded from $L^{1}(\mu)$ to $L^{1,\infty}(\mu)$ and on $L^p(\mu)$ for every $p>1$. We remark that this was proved assuming that $\mu$ is locally doubling. A careful inspection of the proof of \cite[Theorem 3.3]{LSTV} reveals that this property is not necessary to prove the weak type $(1,1)$ boundedness of $\mathcal{M}_\mu$. Note that the aforementioned assumption implies a restriction on the geometry of a tree. Specifically, if $T$ admits a locally doubling flow measure, then $T$ has bounded degree (as discussed in \cite[Corollary 2.3]{LSTV}). We emphasize that in the first part of this paper, there are no hypotheses regarding the geometry of the tree; it may not even be locally finite.  


Inspired by the classical results obtained in the Euclidean setting, namely on $\mathbb R^n$ equipped with the Lebesgue measure (see \cite{Mu1}), we now focus on the study of the boundedness of $\mathcal{M}_\mu$ on $L^p(w\mu)$.  In other words, the measure $\mu$ serves as the underlying measure, and our goal is to investigate the boundedness of $\mathcal{M}_\mu$ on the weighted $L^p$ spaces. Clearly, $w\mu$ is a weight, so for notational convenience, for every $p \in [1,\infty)$ we define $L^p_\mu(w):=L^p(w\mu)$ and $L^{p,\infty}_\mu(w):=L^{p,\infty}(w\mu).$ We denote by $\chi_E$ the characteristic function supported on $E \subset T$.  
\\ Assume that $\cM_\mu$ is bounded on $\Lpmuw$ for some $p \in (1,\infty).$ By testing $\cM_\mu$ on $\chi_R w^{-p/p'}$ where $R \in \cR$, it is easy to verify that necessarily  \begin{align}\label{nec cond}
   \sup_{R \in \mathcal{R}} \bigg(\frac{1}{\mu(R)} \sum_{y \in R} w(y)\mu(y) \bigg)^{1/p}  \bigg(\frac{1}{\mu(R)} \sum_{y \in R} w(y)^{-1/(p-1)}\mu(y) \bigg)^{1/p'}< \infty,
\end{align} where $p'$ is such that $1/p+1/p'=1$.
Similarly, if $\cM_\mu$ is bounded from $L^1_\mu(w)$ to $L^{1,\infty}_\mu(w)$, then one can prove that
\begin{align*}
    \cM_\mu(w)(x) \le C w(x), \qquad \forall x \in T.
\end{align*}

This motivates the following definition of $A_p(\mu)$ weights. 
\begin{definition} Let $p \in (1,\infty)$. We say that a weight $w$ is  an $A_p(\mu)$ weight if 
\begin{align}\label{defAp}
    [w]_{A_p(\mu)}:=\sup_{R \in \mathcal{R}} \bigg(\frac{1}{\mu(R)} \sum_{y \in R} w(y)\mu(y) \bigg)  \bigg(\frac{1}{\mu(R)} \sum_{y \in R} w(y)^{-1/(p-1)}\mu(y) \bigg)^{p-1}< \infty.
\end{align} We say that $w$ is an $A_1(\mu)$ weight if 
\begin{align*}
        [w]_{A_1(\mu)}:=\sup_{R \in \mathcal{R}} \bigg(\frac{1}{\mu(R)} \sum_{y \in R} w(y) \mu(y) \bigg)\|w^{-1}\|_{L^\infty(R)}<\infty.
    \end{align*}
\end{definition} For notational convenience we set  $w_\mu(E):=\sum_{y \in E}w(y)\mu(y)$ for every $E \subset T$. Hence, for all $p \in (1,\infty)$ we can write
\begin{align*}
    [w]_{A_p(\mu)}&=\sup_{R \in \mathcal{R}} \frac{w_\mu(R)}{\mu(R)}\bigg(\frac{1}{\mu(R)} \sum_{y \in R} w(y)^{-1/(p-1)}\mu(y) \bigg)^{p-1}, 
    \end{align*} and similarly 
    \begin{align*}
     [w]_{A_1(\mu)}&=\sup_{R \in \mathcal{R}} \frac{w_\mu(R)}{\mu(R)}\|w^{-1}\|_{L^\infty(R)}.
\end{align*}
  The next proposition summarizes some properties of $A_p(\mu)$ weights.
\begin{proposition}\label{prop:Ap} Let $w \in A_p(\mu)$  and $p \in [1,\infty)$. Then, 
\begin{itemize}
    \item[$i)$] $[\lambda w]_{A_p(\mu)}=[w]_{A_p(\mu)}$ for all $\lambda>0$; \\ 
    \item[$ii)$] if $p\in(1,\infty)$, then the function $w^{-1/(p-1)}$ is in $A_{p'}(\mu)$, where $1/p'=1-1/p$, and 
    \begin{align*}
        [w^{-1/(p-1)}]_{A_{p'}(\mu)}=[w]_{A_p(\mu)}^{1/(p-1)};
    \end{align*} 
    \item[$iii)$] $[w]_{A_p(\mu)} \ge 1$ and the equality holds if and only if $w$ is a constant weight; 
    \item[$iv)$] the classes $A_p(\mu)$ are increasing with $p$, specifically
    \begin{align*}
        [w]_{A_q(\mu)} \le [w]_{A_p(\mu)}, 
    \end{align*} whenever $1<p\le q<\infty;$
    \item [$v)$] $\lim_{q \to 1^+} [w]_{A_q(\mu)}=[w]_{A_1(\mu)}$ if $w \in A_1(\mu)$;
    \item[$vi)$] we have the following equivalent characterization of $A_p(\mu)$ weights:
    \begin{align*}
        [w]_{A_p(\mu)}=\sup_{R \in \mathcal{R}} \sup_{|f| \not\equiv 0\hspace{0.1cm}  on \hspace{0.07cm} R} \frac{\bigg(\frac{1}{\mu(R)}\sum_{y \in R} |f(y)| \mu(y)\bigg)^p}{\frac{1}{w_{\mu}(R)}\sum_{y \in R} |f(y)|^p w(y) \mu(y)}.
    \end{align*}
\end{itemize}
\end{proposition}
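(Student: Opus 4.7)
The plan is to verify the six items sequentially; nearly every step reduces to Hölder's inequality (or, for $p = 1$, a direct comparison with $L^\infty$) applied to the defining averages on a single $R \in \cR$.

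For (i), pulling $\lambda$ through the two factors in the $A_p$ definition produces $\lambda \cdot \lambda^{-(p-1)/(p-1)} = 1$, and the $p=1$ case uses $\|(\lambda w)^{-1}\|_{L^\infty(R)} = \lambda^{-1}\|w^{-1}\|_{L^\infty(R)}$. For (ii), I set $\sigma := w^{-1/(p-1)}$ and observe that $1/(p'-1) = p-1$, so $\sigma^{-1/(p'-1)} = w$; the $A_{p'}(\mu)$ expression for $\sigma$ is then exactly the $A_p(\mu)$ expression for $w$ with the two factors interchanged and raised to the $(p-1)$-th power. For (iii), Hölder's inequality applied to $1 = w^{1/p} \cdot w^{-1/p}$ on each $R$ yields $[w]_{A_p(\mu)} \ge 1$ (the $p=1$ case uses $(\min_R w)^{-1} \le \|w^{-1}\|_{L^\infty(R)}$ and $\min_R w \le w_\mu(R)/\mu(R)$). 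For the equality case, equality in Hölder on every trapezoid forces $w$ to be constant on each $R$, and by using overlapping trapezoids such as $R_1^3(p^2(x))$, which contains both a vertex $x$ and its predecessor $p(x)$, one propagates this to global constancy.

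For (iv), Hölder's inequality with exponent $r := (q-1)/(p-1) > 1$ applied to $w^{-1/(q-1)} \cdot 1$ on $R$ yields
$$\left(\frac{1}{\mu(R)}\sum_{y \in R} w(y)^{-1/(q-1)}\mu(y)\right)^{q-1} \le \left(\frac{1}{\mu(R)}\sum_{y \in R} w(y)^{-1/(p-1)}\mu(y)\right)^{p-1},$$
which, after multiplying by $w_\mu(R)/\mu(R)$ and taking the supremum, gives the monotonicity; the extension $[w]_{A_q(\mu)} \le [w]_{A_1(\mu)}$ follows from the pointwise bound $w(y)^{-1/(q-1)} \le \|w^{-1}\|_{L^\infty(R)}^{1/(q-1)}$. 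For (v), the second factor in the $A_q(\mu)$ integrand is exactly $\|w^{-1}\|_{L^{1/(q-1)}(R,\,\mu/\mu(R))}$; since $L^r$-norms against a probability measure increase monotonically to the $L^\infty$-norm as $r \to \infty$, each $A_q(\mu)$ integrand $f_q(R)$ increases monotonically to the $A_1(\mu)$ integrand $f_1(R)$ as $q \to 1^+$. Combined with (iv), pointwise monotone convergence together with $f_q(R) \le [w]_{A_q(\mu)}$ allows one to interchange the supremum and the limit, producing the claimed equality.

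For (vi), the inequality $\le$ is Hölder's inequality applied to the decomposition $|f| = (|f|^p w)^{1/p} \cdot w^{-1/p}$ on each $R$: the ratio is then bounded above by $\frac{w_\mu(R)}{\mu(R)}\bigl(\frac{1}{\mu(R)}\sum_{y \in R} w(y)^{-1/(p-1)}\mu(y)\bigr)^{p-1}$, whose supremum over $R$ is $[w]_{A_p(\mu)}$. For the reverse inequality, the extremal choice $f = w^{-1/(p-1)} \chi_R$ satisfies $|f|^p w = |f| = w^{-1/(p-1)}$, so a short calculation shows that the ratio equals exactly the $A_p(\mu)$ integrand for $R$, and taking a supremum over $R$ closes the proof. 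The only point requiring genuine care is the sup-limit interchange in (v); every other step is routine Hölder/Jensen manipulation.
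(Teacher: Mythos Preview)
Your proposal is correct and follows exactly the standard route the paper has in mind: the paper's proof consists solely of the sentence ``All of these properties can be proved by using well-known arguments as in \cite[Proposition 9.1.5]{Gra}; we omit the details,'' and your H\"older/Jensen manipulations are precisely those arguments (the paper's commented-out draft proof matches your steps for $iii)$, $iv)$ and $vi)$ almost verbatim). One minor omission: in $vi)$ your extremal choice $f=w^{-1/(p-1)}$ only covers $p>1$; for $p=1$ one takes $f=\delta_{y_0}/w$ with $y_0$ a (near-)minimizer of $w$ on $R$, which the paper's draft also notes.
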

\begin{proof}
    All of these properties can be proved by using well-known arguments as in \cite[Proposition 9.1.5.]{Gra}; we omit the details. 
\end{proof} Recall that $w :\mathbb Z \to (0,\infty)$ is an $A_p$ weight on $\mathbb Z$ with $p \in (1,\infty)$ if
\begin{align*}
    [w]_{A_p(\mathbb Z)}:=\sup_{{I \ \mathrm{intervals \ in\ }\mathbb Z}} \bigg(\frac{1}{|I|} \sum_{x \in I} w(x)  \bigg)\bigg(\frac{1}{|I|} \sum_{x \in I} w(x)^{-1/(p-1)}  \bigg)^{p-1}<\infty,
\end{align*} where $|\cdot|$ denotes the counting measure on $\mathbb Z$. Similarly, 
$$[w]_{A_1(\mathbb Z)}:=\sup_{I \ \mathrm{intervals \ in\ }\mathbb Z} \bigg(\frac{1}{|I|} \sum_{x \in I} w(x)  \bigg)\|w^{-1}\|_{L^\infty(I)}<\infty.$$
One may inquire whether $A_p(\mu)$ is trivial. In this regard, we now show that it is possible to construct plenty of nontrivial $A_p(\mu)$ weights. \\ 
 We denote by $\mathcal{L}(T)$ the family of weights $w$ on $T$ that only depend on the level, i.e., $w(x)=W(\ell(x))$ for some $W:\mathbb Z \to (0,\infty)$ and for every $x \in T.$  In the next theorem we characterize the $A_p(\mu)$ weights in $\mathcal{L}(T)$. Remarkably, it turns out that such weights do not depend on the particular choice of $\mu$. \begin{theorem}\label{th01}For every $p\in [1,\infty)$, $w \in \mathcal{L}(T)$ is an $A_p(\mu)$ weight on $T$ if and only if $W\in A_p(\mathbb Z)$. Moreover, we have that $[w]_{A_p(\mu)}=[W]_{A_p(\mathbb Z)}.$
\end{theorem}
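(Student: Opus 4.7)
The proof reduces the $A_p(\mu)$ condition on $T$ to the classical $A_p(\mathbb Z)$ condition on intervals via a single computation. For an admissible trapezoid $R = R_{h_1}^{h_2}(x)$ and any exponent $\alpha \in \mathbb R$, iterating the flow condition $j$ times yields $\sum_{y \le x,\,d(x,y)=j} \mu(y) = \mu(x)$. Since $w\in\mathcal L(T)$, the function $w(y)^\alpha = W(\ell(x)-d(x,y))^\alpha$ is constant on each level slice of $R$, so combining with $\mu(R) = (h_2-h_1)\mu(x)$ gives
\begin{align*}
\frac{1}{\mu(R)}\sum_{y\in R} w(y)^\alpha\mu(y) \;=\; \frac{1}{h_2-h_1}\sum_{j=h_1}^{h_2-1} W(\ell(x)-j)^\alpha \;=\; \frac{1}{|I_R|}\sum_{k\in I_R} W(k)^\alpha,
\end{align*}
where $I_R := \{\ell(x)-h_2+1,\dots,\ell(x)-h_1\}$ is an interval in $\mathbb Z$ of length $h_2-h_1$. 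An analogous identity $\|w^{-1}\|_{L^\infty(R)} = \|W^{-1}\|_{L^\infty(I_R)}$ handles the $p=1$ case.

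Applying the identity with $\alpha = 1$ and $\alpha = -1/(p-1)$ for $p>1$, or with the $L^\infty$ formulation for $p=1$, shows that the quantity inside the supremum defining $[w]_{A_p(\mu)}$ evaluated on $R$ is \emph{exactly} the analogous quantity defining $[W]_{A_p(\mathbb Z)}$ evaluated on $I_R$. For the singletons in $\mathcal R$, both expressions equal $1$ and correspond to singleton intervals in $\mathbb Z$. Taking the supremum of the $T$-side directly gives $[w]_{A_p(\mu)} \le [W]_{A_p(\mathbb Z)}$.

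For the reverse inequality, it suffices to verify that every interval $I = \{a,\dots,b\}\subset\mathbb Z$ of levels attained by $T$ is realized as some $I_R$. Setting $L := b-a+1$, I would take $h_1 = L$, $h_2 = 2L$, which is admissible since $h_2/h_1 = 2\in[2,\beta]$, and pick any vertex $x$ with $\ell(x) = b+L$; then $I_{R_{h_1}^{h_2}(x)} = I$. Such a vertex exists because the set of levels of $T$ is upward-closed (each point has ancestors at every higher level along $[x,\omega_*)$) and $b$ is already a level. This gives $[W]_{A_p(\mathbb Z)} \le [w]_{A_p(\mu)}$ and completes the identification.

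The main (very mild) obstacle is step 2: one must choose the admissibility parameters so that every interval length is realized. The freedom in the ratio bound $\beta\ge 2$ is already enough to use the single pair $(h_1,h_2)=(L,2L)$, making the realization uniform in $L$. Everything else is the clean bookkeeping of the flow identity, which is why no assumption on the geometry of $T$ or on $\mu$ beyond the flow condition enters here.
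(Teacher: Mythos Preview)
Your proof is correct and follows the same approach as the paper: both reduce the $A_p(\mu)$ quantity on a trapezoid to the $A_p(\mathbb Z)$ quantity on the corresponding interval via the flow identity $\sum_{y\le x,\,d(x,y)=j}\mu(y)=\mu(x)$, obtaining $[w]_{A_p(\mu)}\le[W]_{A_p(\mathbb Z)}$ immediately, and then realize every interval of $\mathbb Z$ as some $I_R$ for the reverse inequality.

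The one genuine difference is in this realization step. The paper fixes the endpoints $\ell(x_0)-h_1=b$, $\ell(x_0)-h_2+1=a$ and argues by monotonicity of $t\mapsto (t+1-a)/(t-b)$ that some integer level $\ell(x_0)>b$ makes the ratio $h_2/h_1$ land in $[2,\beta]$. Your explicit choice $h_1=L$, $h_2=2L$, $\ell(x)=b+L$ (with $L=|I|$) is cleaner: it hits the admissibility threshold $h_2/h_1=2$ on the nose and avoids the continuity argument entirely. One small point worth tightening: you restrict to intervals ``of levels attained by $T$'', but $[W]_{A_p(\mathbb Z)}$ is a supremum over \emph{all} intervals in $\mathbb Z$. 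This is harmless because the flow condition $\mu(x)=\sum_{y\in s(x)}\mu(y)>0$ forces every vertex to have successors, so levels extend to $-\infty$, while the geodesic $[o,\omega_*)$ gives all nonnegative levels; hence $\ell$ is surjective onto $\mathbb Z$ and no restriction is needed. (The paper's proof uses this implicitly as well.)
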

\begin{proof} Assume that $p\in (1,\infty)$. The case $p=1$ is easier and we omit the details. Observe that for every $R=R_{h_1}^{h_2}(x_0) \in \mathcal{R}$, 
\begin{align*}
   &I_R:=\bigg(\frac{1}{\mu(R)} \sum_{x \in R} w(x) \mu(x) \bigg)\bigg(\frac{1}{\mu(R)} \sum_{x \in R} w(x)^{-1/(p-1)} \mu(x) \bigg)^{p-1}= \\ 
   &= \bigg(\frac{1}{\mu(x_0)(h_2-h_1)} \sum_{\ell=\ell(x_0)-h_2+1}^{\ell(x_0)-h_1} W(\ell)\sum_{x \le x_0, \ell(x)=\ell} \mu(x) \bigg)\times \\ 
   &\times\bigg(\frac{1}{\mu(x_0)(h_2-h_1)} \sum_{\ell=\ell(x_0)-h_2+1}^{\ell(x_0)-h_1} W(\ell)^{-1/(p-1)}\sum_{x \le x_0, \ell(x)=\ell} \mu(x) \bigg)^{p-1} 
\end{align*} and since $\mu$ is a flow measure, $\sum_{x \le x_0, \ell(x)=\ell} \mu(x)=\mu(x_0)$ for every $\ell \le \ell(x_0)$. We conclude that 
\begin{align*}
    I_R=\bigg(\frac{1}{h_2-h_1} \sum_{\ell=\ell(x_0)-h_2+1}^{\ell(x_0)-h_1} W(\ell) \bigg)\bigg(\frac{1}{h_2-h_1} \sum_{\ell=\ell(x_0)-h_2+1}^{\ell(x_0)-h_1} W(\ell)^{-1/(p-1)}\bigg)^{p-1}.
\end{align*} Since $|[\ell(x_0)-h_2+1,\ell(x_0)-h_1]|=h_2-h_1$, this implies that $[w]_{A_p(\mu)} \le [W]_{A_p(\mathbb Z)}$. For the converse inequality, it suffices to prove that given an interval $I=[a,b]\subset \mathbb Z$ there exists an admissible trapezoid $R_{h_1}^{h_2}(x_0)$ (where $h_1,h_2$ and $x_0$ depend on $a$ and $b$) such that 
\begin{align*}
    \ell(x_0)-h_1=b, \ \ \  \ell(x_0)-h_2+1=a\ \ \ \mathrm{and} \ \ \ 2\le \frac{h_2}{h_1}\le \beta. 
\end{align*} Hence, we need to show that there exists $x_0$ such that
$$\begin{cases}
1\le h_1=\ell(x_0)-b \\ 
h_2=\ell(x_0)+1-a\\ 
2\le \frac{\ell(x_0)+1-a}{\ell(x_0)-b} \le \beta.
\end{cases}$$ The latter inequality holds for some $\ell(x_0) \in \mathbb Z$ because $x \mapsto \frac{x+1-a}{x-b}$ is decreasing on the interval $(b,\infty)$ (notice that $-1+a-b<0$), $2\le \frac{b+2-a}{1}$ and $\frac{x+1-a}{x-b}\to 1$ as $x \to \infty$.
\end{proof} 
In a doubling metric measure space $(X,\rho,\nu)$, one can prove that the measure $w\nu$ is doubling (see e.g. \cite[Lemma 4]{Ca}), where $w$ is an $A_p$ weight defined as in \eqref{defAp} but using balls instead of trapezoids. However, this does not hold true in our context due to the nondoubling nature of a flow measure. \\ 
By drawing a parallel with the doubling setting, we aim to show that $w_\mu(\widetilde{R})$ is uniformly bounded by a multiple of $w_\mu(R)$ for every admissible trapezoid $R$ and $w \in A_p(\mu)$. Notice that the envelope of an admissible trapezoid, as defined in \eqref{envelope}, never meets the criteria for admissibility. This fact prevents us from following the classical proof. 
Indeed,  if $\widetilde R$ were admissible, choosing $f=\chi_R$ and $\widetilde R$ in place of $R$ in  Proposition \ref{prop:Ap} $vi)$  we would directly deduce that $w_\mu(\widetilde R) \le C  w_\mu(R)[w]_{A_p(\mu)}$ for every $R \in \cR$.
For this reason, we shall use a different strategy based on the following geometric lemma. 
\begin{lemma}\label{lem:Rtilde} Let $R=R_{h_1}^{h_2}(x) \in \mathcal{R}$ and define 
\begin{align*}
    &R_0:=R_{\lceil h_1/\beta\rceil}^{h_1}(x),\\
    &\overline{R}_0:=R_{1}^{\beta}(x),  \\ 
    &R_1:=R_{\lceil(h_1+h_2)/(2\beta) \rceil}^{\lfloor(h_1+h_2)/2\rfloor}(x), \\
    &\overline{R}_1:=R_{\lfloor \beta/2\rfloor}^{\lfloor\beta/2 \rfloor \beta}(x), \\ 
    &R_2:=R_{\lfloor(h_1+h_2)/2\rfloor}^{\lfloor(h_1+h_2)/2\rfloor \beta}(x),\\ 
    &R_3:=R_{\lceil{(h_1+h_2)/2}\rceil\lfloor \beta/2\rfloor}^{\lceil(h_1+h_2)/2 \rceil\lfloor\beta/2\rfloor\beta}(x).
\end{align*} The following cases arise: 
\begin{itemize}
    \item[i)] if  $h_1 \ge 3$, we have that

\begin{align}\label{coveringtildeR}
    \widetilde{R} \subset R_0 \cup R_1 \cup R_2 \cup R_3.
\end{align}
Moreover, $R_j \in \mathcal{R}$ and $\mu(R)\approx_\beta \mu(R_j)$ for $j=0,1,2,3$ and
\begin{align}\label{numero}
    \mu(R)\approx_{\beta} \mu(R_0 \cap R_1) \approx_{\beta} \mu(R_1 \cap R) \approx_{\beta} \mu(R \cap R_2) \approx_{\beta} \mu(R_3\cap R_2);
\end{align} 
\item[ii)] if  $h_1=1$ and $h_2 \ge 3$ or $h_1=2$, we have that 
\begin{align}\label{second-incl}
    \widetilde R \subset \overline{R}_0\cup \overline{R}_1\cup R_2\cup R_3,
\end{align} $\overline{R}_0, \overline{R}_1, R_2, R_3 \in \cR$ and
\begin{align*}
    \mu(R) \approx_{\beta} \mu(\overline{R}_0 \cap R)  \approx_{\beta} \mu(R \cap R_2) \approx_\beta \mu(\overline{R}_1\cap R_2) \approx_{\beta} \mu(R_3\cap R_2) \approx_{\beta} \mu(R_3);
\end{align*}
\item[iii)]  if $h_1=1$ and $h_2=2$, then  
\begin{align}\label{third-incl}
    \widetilde{R} \subset \overline{R}_0 \cup \overline{R}_1,
\end{align} $\overline{R}_0,\overline{R}_1 \in \cR$ and 
\begin{align*}
     \mu(R) =  \mu(R \cap \overline{R}_0) \approx_{\beta} \mu(\overline{R}_0\cap \overline{R}_1)\approx_{\beta}\mu(\overline{R}_0) \approx_\beta \mu(\overline{R}_1).
\end{align*}
\end{itemize}
\end{lemma}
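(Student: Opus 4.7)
The plan is to reduce every assertion in the lemma to elementary arithmetic on level intervals. Every set named in the lemma is a trapezoid with root $x$, so an inclusion such as $\widetilde R \subset R_0 \cup R_1 \cup R_2 \cup R_3$ is equivalent to the interval $[\lceil h_1/\beta\rceil, h_2\beta) \subset \mathbb N$ being covered by the four corresponding level intervals, and by the flow identity \eqref{eq:measR} every measure statement reduces to a length computation. Recall that admissibility of $R_a^b(x)$ is the condition $2 \le b/a \le \beta$. For the given admissible $R$ one has $h_1 \le h_2 - h_1 \le (\beta - 1) h_1$, so
\[
\mu(R) \approx_\beta h_1 \mu(x) \approx_\beta h_2 \mu(x) \approx_\beta (h_1 + h_2) \mu(x),
\]
a comparison that will be used repeatedly.

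In case (i), $h_1 \ge 3$, I would verify the inclusion by showing that the four intervals chain together: $R_0$ covers $[\lceil h_1/\beta\rceil, h_1)$; the inequality $(h_1+h_2)/(2\beta) \le h_1$, which follows from $h_2 \le \beta h_1$, guarantees that $R_1$ begins before $R_0$ ends; $R_2$ starts exactly where $R_1$ stops; and the bound $\lceil(h_1+h_2)/2\rceil \lfloor\beta/2\rfloor \ge h_2$, which uses $\beta \ge 12$, forces $R_3$ past level $h_2 \beta$. Admissibility is a direct ratio check: for $R_0$, the hypothesis $h_1 \ge 3$ combined with $\beta \ge 12$ pins $h_1/\lceil h_1/\beta\rceil$ in $[2,\beta]$; the argument for $R_1$ is analogous once $(h_1+h_2)/2 \ge 3$, and $R_2, R_3$ have ratio exactly $\beta$ by construction. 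The measure comparabilities in \eqref{numero}, including the intersection lengths, then follow immediately by computing lengths of integer intervals and invoking $\mu(R) \approx_\beta h_1 \mu(x)$.

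In case (ii), $h_1 \in \{1, 2\}$ with $h_2 \ge 3$ when $h_1 = 1$, the sets $R_0$ and $R_1$ from case (i) either collapse to the empty set or fall below the admissibility ratio $2$, so I would replace them by the fixed-ratio-$\beta$ trapezoids $\overline R_0 = R_1^\beta(x)$ and $\overline R_1 = R_{\lfloor\beta/2\rfloor}^{\lfloor\beta/2\rfloor\beta}(x)$, both of which are automatically admissible. The inclusion \eqref{second-incl} is verified by chaining the four intervals, noting that $\lfloor\beta/2\rfloor \le \beta$ ensures $\overline R_1$ picks up before $\overline R_0$ ends, while the transition to $R_2$ and $R_3$ mirrors case (i); the required intersection lengths are again direct counts. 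Case (iii), the degenerate instance $h_1 = 1, h_2 = 2$, reduces to observing that $\overline R_0 \cup \overline R_1$ already covers $[1, \lfloor\beta/2\rfloor\beta) \supset [1, 2\beta) = [\lceil h_1/\beta\rceil, h_2\beta)$ because $\lfloor\beta/2\rfloor \ge 2$, and the three remaining comparabilities are one-line length computations.

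The only genuine obstacle is bookkeeping: one must track the $O(1)$ slack introduced by each $\lceil\cdot\rceil$ and $\lfloor\cdot\rfloor$ and verify that the margin $\beta \ge 12$ is large enough for every ratio to land inside $[2,\beta]$ and every intersection length to be a $\beta$-dependent fraction of $\mu(R)$. The three-case split is forced because the envelope $\widetilde R$ is itself never admissible, so it cannot be absorbed by a single admissible trapezoid, while the natural symmetric choice of $R_0$ degenerates for $h_1 \le 2$; once $h_1 \ge 3$, all the integer inequalities go through simultaneously. No geometric input about $T$ enters the argument beyond the flow identity $\mu(R_a^b(x)) = (b-a)\mu(x)$.
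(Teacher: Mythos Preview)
Your proposal is correct and follows the same strategy as the paper: reduce every inclusion and measure comparability to interval arithmetic via the flow identity $\mu(R_a^b(x))=(b-a)\mu(x)$, then verify that the relevant level intervals chain together and that each admissibility ratio lands in $[2,\beta]$ thanks to $\beta\ge 12$. The paper carries out case~(i) in detail (with a two-case split, according to whether $h_1+h_2<2\beta$, for the lower admissibility bound of $R_1$) and explicitly leaves cases~(ii) and~(iii) to the reader, just as you outline.
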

\begin{proof} We prove $i)$.  Suppose $h_1 \ge 3$. Then, it is clear that $R_0, R_2, R_3 \in \mathcal{R}$. To show that $R_1 \in \cR$ we first observe that $$ \frac{\lfloor\frac{h_1+h_2}{2} \rfloor}{\lceil \frac{h_1+h_2}{2\beta} \rceil} \leq \frac{h_1+h_2}{2}\frac{2\beta}{h_1+h_2}=\beta.$$
On the one hand, if  $h_1+h_2 <2\beta$ we get
\begin{align*}
  2\le  h_1 \le \frac{\lfloor\frac{h_1+h_2}{2} \rfloor}{1},
\end{align*}
on the other hand, if $h_1+h_2 \ge 2\beta$ then
\begin{align*}
 2 \le  \frac{(\beta-1)}{2}=\frac{(\beta-1)(h_1+h_2)}{2(h_1+h_2)}    \le  \frac{\beta(h_1+h_2)-2\beta}{h_1+h_2+2\beta} \le \frac{\lfloor\frac{h_1+h_2}{2} \rfloor}{\lceil \frac{h_1+h_2}{2\beta} \rceil} .
\end{align*}
We now prove \eqref{coveringtildeR}. For this purpose, we first notice that the smaller heights of $R_0$ and $\widetilde R$ coincide by definition, and by the fact that $\beta\geq 12$ and $h_2\leq \beta h_1$, we deduce that
$$
h_1>\frac{h_1}{2}(1/\beta+1)+1>\frac{h_1+h_2}{2\beta}+1>  \lceil(h_1+h_2)/(2\beta)\rceil,
$$
which implies $R_0\cap R_1\neq\emptyset$. Moreover, $R_3$ entirely covers the lower part of $\widetilde R$ since $\lceil(h_1+h_2)/2\rceil\lfloor \beta/2 \rfloor\beta \ge h_2\beta$. 

It remains to prove \eqref{numero}. Formula \eqref{eq:measR} yields
\begin{align*}
    \mu(R \cap R_1)&\approx\mu(R \cap R_2)\approx \mu(x)(h_2-h_1)=\mu(R), \\ 
    \mu(R_3 \cap R_2)&=\mu(x)\big(\beta-\lfloor \beta/2 \rfloor\big)\lfloor(h_1+h_2)/2 \rfloor \approx \mu(R_3) \approx_\beta \mu(R).
\end{align*} 
Moreover, 
$$
  \mu(R_0 \cap R_1)=\mu(x)\big(h_1-\lceil (h_1+h_2)/(2\beta)\rceil \big).
$$
By the admissibility condition $h_2\leq \beta h_1$, the fact that $\beta\geq 12$, and the assumption $h_1 \ge 3$, 
\begin{align}
h_1-\lceil (h_1+h_2)/(2\beta)\rceil\geq h_1-\frac{h_1+h_2}{2\beta}-1\geq h_1\Big(1-\frac{1}{2\beta}-\frac12\Big)-1\geq \frac{h_1}{\beta}. 
\end{align}
This implies that 
$$
  \mu(R_0 \cap R_1)\geq \mu(x) \frac{h_1}{\beta}\approx_\beta\mu(R),
$$
as required to conclude the proof of $i)$. 

The remaining assertions are easier and can be proved similarly, so the details are left to the reader.

\end{proof}

\begin{theorem}\label{th1} Let $w \in A_p(\mu)$ for some $p\in[1,\infty)$. Then, there exists a constant $C_{\beta,p}>0$ such that $w_\mu(\widetilde{R}) \le C_{\beta, p}[w]_{A_p(\mu)}^2w_\mu(R)$ for every $R \in \mathcal{R}$.  
\end{theorem}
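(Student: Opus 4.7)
The plan is to combine the covering lemma just proved (Lemma~\ref{lem:Rtilde}) with the testing characterization in Proposition~\ref{prop:Ap}$(vi)$. The key point is that although $\widetilde R$ itself is not admissible, it is covered by a short chain of admissible trapezoids in which consecutive members have intersection of $\mu$-measure comparable to $\mu(R)$. The $A_p(\mu)$ condition may then be exploited on each admissible piece.

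First I would derive the main inequality from Proposition~\ref{prop:Ap}$(vi)$: for every admissible $R' \in \mathcal R$ and every $E \subset R'$ with $\mu(E)>0$, taking $f=\chi_E$ yields
\begin{equation*}
w_\mu(R') \;\le\; [w]_{A_p(\mu)} \Big(\frac{\mu(R')}{\mu(E)}\Big)^{p} w_\mu(E),
\end{equation*}
with the obvious modification (first power instead of $p$-th) when $p=1$. In particular, whenever $R',R''\in\mathcal R$ satisfy $\mu(R'\cap R'')\approx_\beta \mu(R')\approx_\beta \mu(R'')$, we obtain $w_\mu(R')\le C_{\beta,p}[w]_{A_p(\mu)}\,w_\mu(R'\cap R'')\le C_{\beta,p}[w]_{A_p(\mu)}\,w_\mu(R'')$. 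This is the transport step of the argument.

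Next I would treat the three cases of Lemma~\ref{lem:Rtilde} separately. In case $(i)$, the covering $\widetilde R\subset R_0\cup R_1\cup R_2\cup R_3$ gives $w_\mu(\widetilde R)\le \sum_{j=0}^{3} w_\mu(R_j)$. Since $\mu(R\cap R_1)\approx_\beta\mu(R\cap R_2)\approx_\beta \mu(R)$, a single application of the transport step yields $w_\mu(R_j)\le C_{\beta,p}[w]_{A_p(\mu)}\,w_\mu(R)$ for $j=1,2$. For $R_0$ and $R_3$, which do not meet $R$, I would chain twice: from the comparabilities $\mu(R_0\cap R_1)\approx_\beta\mu(R_0)\approx_\beta\mu(R_1)$ and $\mu(R_3\cap R_2)\approx_\beta\mu(R_3)\approx_\beta\mu(R_2)$ of Lemma~\ref{lem:Rtilde}, first pass from $R_0$ to $R_1$ (resp.\ from $R_3$ to $R_2$) and then from $R_1$ (resp.\ $R_2$) to $R$. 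This yields $w_\mu(R_j) \le C_{\beta,p}[w]_{A_p(\mu)}^2\, w_\mu(R)$ for $j=0,3$, and summing the four contributions gives the claim. Cases $(ii)$ and $(iii)$ are handled by exactly the same two-step chaining, using the comparabilities listed there; at most two applications of the transport step are ever needed, which is precisely why the exponent $2$ appears on $[w]_{A_p(\mu)}$.

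The only genuine subtlety, and the reason one cannot simply argue as in the doubling setting, is that $\widetilde R\notin\mathcal R$, so the $A_p(\mu)$ condition cannot be applied directly on $\widetilde R$. Lemma~\ref{lem:Rtilde} is designed precisely to bypass this obstacle, and the proof reduces to bookkeeping of constants through each of the three geometric regimes. For $p=1$ the same scheme works verbatim, using the $p=1$ instance of Proposition~\ref{prop:Ap}$(vi)$, producing the factor $[w]_{A_1(\mu)}^2$.
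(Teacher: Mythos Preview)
Your proof is correct and follows essentially the same approach as the paper: both use Lemma~\ref{lem:Rtilde} to cover $\widetilde R$ by admissible trapezoids and then apply Proposition~\ref{prop:Ap}$(vi)$ with characteristic functions to chain through consecutive pieces whose intersections have $\mu$-measure comparable to $\mu(R)$. Your abstraction of this chaining into a reusable ``transport step'' is a clean way to organize the argument, but mathematically it is the same two-step telescoping the paper performs explicitly (writing $w_\mu(R_0)/w_\mu(R)=\big(w_\mu(R_0)/w_\mu(R_1)\big)\big(w_\mu(R_1)/w_\mu(R)\big)$, etc.), leading to the same exponent $2$ on $[w]_{A_p(\mu)}$.
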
 

\begin{proof} Let $R=R_{h_1}^{h_2}(x) \in \mathcal{R}$ for some $x \in T$ and $h_1,h_2 \in \mathbb N$. By Lemma \ref{lem:Rtilde}  we have that 
\begin{align}\label{eq:tildeRbound}
    \frac{w_\mu(\widetilde R)}{w_\mu(R)} \le \begin{cases}\displaystyle{\sum_{j=0}^{3} \frac{w_\mu(R_j)}{w_\mu(R)}} &\text{$h_1 \ge 3$,}\\ \displaystyle{\frac{w_\mu(\overline{R}_0)+w_\mu(\overline{R}_1)}{w_\mu(R)}+\sum_{j=2}^{3} \frac{w_\mu(R_j)}{w_\mu(R)}} &\text{$h_1 \le 2, h_2 \ge 3$,} \\ \displaystyle{\sum_{j=0}^{1} \frac{w_\mu(\overline{R}_j)}{w_\mu(R)}} &\text{$h_1=1,h_2=2$.}
    \end{cases}
\end{align}
    Clearly, 
    \begin{align*}
        &\frac{w_\mu(R_0)}{w_\mu(R)}=\frac{w_\mu(R_0)}{w_\mu(R_1)}\frac{w_\mu(R_1)}{w_\mu(R)}, \\ 
        &\frac{w_\mu(R_3)}{w_\mu(R)}=\frac{w_\mu(R_3)}{w_\mu(R_2)}\frac{w_\mu(R_2)}{w_\mu(R)}.
    \end{align*} Thus, for $h_1 \ge 3$, by \eqref{eq:tildeRbound}, \eqref{numero}, and Proposition \ref{prop:Ap} $vi)$,
    \begin{align*}
         \frac{w_\mu(\widetilde R)}{w_\mu(R)} &\le C_{\beta,p}\bigg[\frac{w_\mu(R_0)}{w_\mu(R_1)}\bigg(\frac{1}{\mu(R_0)} \sum_{y \in R_0} \chi_{R_1}(y) \ \mu(y)\bigg)^p\frac{w_\mu(R_1)}{w_\mu(R)}\bigg(\frac{1}{\mu(R_1)} \sum_{y \in R_1} \chi_{R}(y) \ \mu(y)\bigg)^p \\ 
         &+\frac{w_\mu(R_1)}{w_\mu(R)}\bigg(\frac{1}{\mu(R_1)} \sum_{y\in R_1} \chi_{R}(y) \ \mu(y)\bigg)^p+\frac{w_\mu(R_2)}{w_\mu(R)}\bigg(\frac{1}{\mu(R_2)} \sum_{y \in R_2} \chi_{R}(y) \ \mu(y)\bigg)^p \\ 
         &+\frac{w_\mu(R_3)}{w_\mu(R_2)}\bigg(\frac{1}{\mu(R_3)} \sum_{y\in R_3} \chi_{R_2}(y) \ \mu(y)\bigg)^p\frac{w_\mu(R_2)}{w_\mu(R)}\bigg(\frac{1}{\mu(R_2)} \sum_{y \in R_2} \chi_{R}(y) \ \mu(y)\bigg)^p \bigg]\\ 
         &\le C_{\beta,p} ([w]_{A_p(\mu)}+[w]_{A_p(\mu)}^2) \le C_{\beta,p} [w]_{A_p(\mu)}^2.
    \end{align*}One can argue similarly in the remaining cases.
\end{proof} We now prove that $A_p(\mu)$ weights are those for which the maximal operator $\mathcal{M}_\mu$ is bounded on $\Lpmuw$. As a preliminary step, it is useful to introduce an auxiliary maximal operator $\cM_\mu^w$, which is defined by 
\begin{align}
    \cM_\mu^wf(x)=\sup_{R \in \mathcal{R}\hspace{0.05cm}:\hspace{0.05cm}x\in R}\frac{1}{w_\mu(R)}\sum_{y \in R} |f(y)|w(y)\mu(y), \qquad \forall x \in T.
\end{align} The next result is an immediate consequence of Theorem \ref{th1}.
\begin{corollary}\label{cor1}
    Let $w$ be an $A_q(\mu)$ weight for some $q \in [1,\infty)$. Then, $\cM^w_{\mu}$ is bounded from $L^1_\mu(w)$ to $L^{1,\infty}_\mu(w)$ and on $L^p(w)$ for every $p \in (1,\infty]$.
\end{corollary}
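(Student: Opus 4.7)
The plan is to imitate the argument that yields the weak-type $(1,1)$ and $L^p$ boundedness of $\mathcal{M}_\mu$ in \cite[Theorem 3.3]{LSTV}, but to carry it out systematically with the measure $w\mu$ in place of $\mu$. That argument rests on exactly two ingredients: (a) the Vitali-type selection lemma built on Lemma~\ref{lem:LSTV}, which produces, out of any family of admissible trapezoids covering a set $E$, a disjoint subfamily $\{R_{x_i}\}$ such that $E\subset\bigcup_i \widetilde R_{x_i}$; and (b) the measure comparability $\mu(\widetilde R)\approx_\beta \mu(R)$. The selection in (a) depends only on the partial order on $\mu(x)$-values (i.e.\ on the root masses) and on Lemma~\ref{lem:LSTV}, so it is unaffected by replacing the ambient measure. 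The role played by (b) for the measure $w\mu$ is now supplied by Theorem~\ref{th1}, which gives $w_\mu(\widetilde R)\le C_{\beta,q}[w]_{A_q(\mu)}^2\,w_\mu(R)$ for every $R\in\mathcal{R}$.

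Concretely, for the weak-type bound on $L^1_\mu(w)$, I would fix $f\in L^1_\mu(w)$ and $\lambda>0$, and for each $x$ in the level set $E_\lambda=\{\mathcal{M}^w_\mu f>\lambda\}$ choose an admissible trapezoid (or singleton) $R_x\ni x$ with
\begin{equation*}
\frac{1}{w_\mu(R_x)}\sum_{y\in R_x}|f(y)|\,w(y)\,\mu(y)>\lambda.
\end{equation*}
Applying the selection procedure from \cite[Theorem 3.3]{LSTV} (ordering candidates by decreasing $\mu$ of the root, and invoking Lemma~\ref{lem:LSTV} to discard anyone met by a previously chosen trapezoid with larger root mass) produces a disjoint subfamily $\{R_{x_i}\}$ with $E_\lambda\subset\bigcup_i \widetilde R_{x_i}$. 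Theorem~\ref{th1} then gives
\begin{equation*}
w_\mu(E_\lambda)\le \sum_i w_\mu(\widetilde R_{x_i})\le C_{\beta,q}[w]_{A_q(\mu)}^2\sum_i w_\mu(R_{x_i})\le \frac{C_{\beta,q}[w]_{A_q(\mu)}^2}{\lambda}\|f\|_{L^1_\mu(w)},
\end{equation*}
where the last inequality uses the selection property $\lambda\, w_\mu(R_{x_i})<\sum_{y\in R_{x_i}}|f(y)|w(y)\mu(y)$ and the pairwise disjointness of the $R_{x_i}$.

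The $L^\infty(w\mu)$ estimate is immediate from the definition: $\mathcal{M}^w_\mu f(x)\le\|f\|_{L^\infty(w\mu)}$ for every $x$. Marcinkiewicz interpolation between this trivial endpoint and the weak-type $(1,1)$ bound proved above yields $L^p(w\mu)$-boundedness for every $p\in(1,\infty)$. The delicate point that I expect to require most care is the selection step when $T$ is not locally finite: one cannot immediately pick a trapezoid of maximum root mass, so the selection has to be performed via an exhaustion argument, e.g.\ by inducting on a well-ordering of the countable candidate family (or, as in \cite{LSTV}, by first reducing to finite subcollections and passing to the limit), ensuring that the disjointness and covering properties survive. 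Once that standard technical step is in place, the rest of the proof reduces to quoting Lemma~\ref{lem:LSTV} and Theorem~\ref{th1}.
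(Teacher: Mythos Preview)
Your proposal is correct and follows essentially the same route as the paper: prove the weak-type $(1,1)$ bound by combining the Vitali-type selection from Lemma~\ref{lem:LSTV} with the envelope comparability from Theorem~\ref{th1}, then interpolate with the trivial $L^\infty$ endpoint. Your extra remarks on the selection step in the non-locally-finite case are more cautious than the paper, which simply invokes Lemma~\ref{lem:LSTV} directly, but the argument is otherwise identical.
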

\begin{proof} It suffices to prove that $\cM^w_{\mu}$ is of weak type $(1,1)$ and interpolate with the obvious $L^\infty$ bound. \\ 
   Let $f \in L^1_\mu(w)$, fix $\lambda>0$ and set $E_\lambda=\{\cM_\mu^w f>\lambda\}$. We observe that there exists a sequence $\{R_j\}_{j \in J} \subset \cR$ such that $\cup_{j \in J} R_j =E_\lambda$ and 
   \begin{align*}
        w_\mu(R_j) < \frac{1}{\lambda}\sum_{y \in R_j} |f(y)|w(y) \mu(y), \qquad \forall j \in J.
    \end{align*} It follows by Lemma \ref{lem:LSTV} that there exists a subsequence $\{R_{j_k}\}_{k} \subset \{R_j\}_{j \in J}$ of mutually disjoint sets such that $E_\lambda \subset \cup_{k} \widetilde R_{j_k}$. Thus, by Theorem \ref{th1}
    \begin{align*}
        w_\mu(E_\lambda) \le \sum_{k} w_\mu(\widetilde R_{j_k}) \le C_{\beta,q,w} \sum_{k} w_\mu(R_{j_k}) \le \frac{C_{\beta,q,w}}{\lambda} \|f\|_{L^1_\mu(w)},
    \end{align*} as desired. 
\end{proof} The next theorem is an analogue of a result in the spirit of Muckenhoupt  in our setting.
\begin{theorem}\label{Muckenhoupt-Main}
   Let $w\in A_p(\mu)$ for some $p\in(1,\infty)$. Then, $\mathcal{M}_\mu$ is bounded on $\Lpmuw$. Moreover, if $w \in A_1(\mu)$, then  $\mathcal{M}_\mu$ is bounded from $L^1_\mu(w)$ to $L^{1,\infty}_\mu(w)$.
\end{theorem}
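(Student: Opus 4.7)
The proof will split naturally according to whether $p=1$ (weak type) or $p>1$ (strong type); in both cases the engine is the Vitali-type covering furnished by Lemma~\ref{lem:LSTV} combined with the envelope control from Theorem~\ref{th1}, with the appropriate $A_p$-inequality converting $\mu$-averages of $|f|$ into $w\mu$-averages.

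For $w\in A_1(\mu)$, I would prove weak type $(1,1)$ as follows. Given $f\in L^1_\mu(w)$ and $\lambda>0$, cover $E_\lambda:=\{\mathcal M_\mu f>\lambda\}$ by admissible trapezoids $R_x\ni x$ satisfying $\tfrac{1}{\mu(R_x)}\sum_{R_x}|f|\mu>\lambda$, and apply Lemma~\ref{lem:LSTV} to extract a pairwise disjoint subfamily $\{R_j\}$ with $E_\lambda\subset\bigcup_j\widetilde R_j$. The $A_1(\mu)$-condition provides the elementary pointwise comparison $\tfrac{1}{\mu(R)}\sum_R|f|\mu\le [w]_{A_1(\mu)}\tfrac{1}{w_\mu(R)}\sum_R|f|w\mu$ (by bounding $|f(y)|\le |f(y)|w(y)\|w^{-1}\|_{L^\infty(R)}$), which together with the defining inequality for $R_j$ yields $w_\mu(R_j)<\tfrac{[w]_{A_1(\mu)}}{\lambda}\sum_{y\in R_j}|f(y)|w(y)\mu(y)$. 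Summing, applying Theorem~\ref{th1} to pass from $\sum_j w_\mu(\widetilde R_j)$ back to $\sum_j w_\mu(R_j)$, and using the disjointness of $\{R_j\}$ concludes the weak $(1,1)$-bound.

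For $w\in A_p(\mu)$ with $p>1$, the key device is the pointwise majorization
\begin{equation*}
\mathcal M_\mu f(x)^p\le[w]_{A_p(\mu)}\,\mathcal M_\mu^w(|f|^p)(x),\qquad x\in T,
\end{equation*}
obtained by taking the supremum over admissible trapezoids $R\ni x$ of the variational form of $[w]_{A_p(\mu)}$ provided by Proposition~\ref{prop:Ap}~(vi). The plan is to combine this majorization with the $L^q(w\mu)$-boundedness of $\mathcal M_\mu^w$ for every $q>1$ furnished by Corollary~\ref{cor1}, together with a covering argument parallel to the $p=1$ case, but using Proposition~\ref{prop:Ap}~(vi) in place of the $A_1$-condition; this yields a localized weak-type $(p,p)$ bound of the form $w\mu(E_\lambda)\lesssim\tfrac{[w]_{A_p(\mu)}}{\lambda^p}\int_{E_\lambda}|f|^p w\mu$.

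The main obstacle is the upgrade from weak to strong $L^p$-bound in the range $p>1$. The direct route of raising the pointwise majorization to the $p$-th power and integrating against $w\mu$ would require $L^1_\mu(w)$-boundedness of $\mathcal M_\mu^w$, which by Corollary~\ref{cor1} is available only in the weak sense. Since no doubling of $\mu$ is assumed in this section, the classical reverse-H\"older/$A_p$-self-improvement trick is also unavailable. The closure of the argument must therefore exploit the localized weak-type inequality produced by the covering argument, the $L^q$-boundedness of $\mathcal M_\mu^w$ for $q>1$, and the envelope measure estimate of Theorem~\ref{th1}, which serves as a substitute for the doubling property in passing from $w_\mu(\widetilde R)$ to $w_\mu(R)$.
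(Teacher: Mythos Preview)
Your $p=1$ argument is correct and is essentially the paper's proof unwound: the paper first observes $\mathcal M_\mu f\le [w]_{A_1(\mu)}\mathcal M_\mu^w f$ and then invokes the weak $(1,1)$ bound for $\mathcal M_\mu^w$ from Corollary~\ref{cor1}, whose proof is precisely the covering-plus-envelope argument you describe.

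For $p>1$ there is a genuine gap. You correctly note that the pointwise inequality $\mathcal M_\mu f(x)^p\le [w]_{A_p(\mu)}\,\mathcal M_\mu^w(|f|^p)(x)$ only delivers weak type $(p,p)$, since Corollary~\ref{cor1} gives merely weak $(1,1)$ for $\mathcal M_\mu^w$; and you correctly rule out self-improvement via reverse H\"older in this generality. But your final paragraph does not actually propose a mechanism to close the gap---it only lists the available ingredients. The localized weak-type inequality you write down, $w_\mu(E_\lambda)\lesssim\lambda^{-p}\int_{E_\lambda}|f|^p\,w\mu$, does follow from the covering (since each selected $R_j$ lies in $E_\lambda$), but it does not by itself integrate to a strong $L^p$ bound, and you give no indication of how to combine it with the $L^q$-boundedness of $\mathcal M_\mu^w$ for $q>1$.

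The paper's resolution is a two-maximal-operator factorization. Setting $\sigma=w^{-1/(p-1)}\in A_{p'}(\mu)$, one rewrites the $\mu$-average over $R$ so as to obtain the pointwise bound
\[
\mathcal M_\mu f \le [w]_{A_p(\mu)}^{1/(p-1)}\Bigl[\mathcal M_\mu^{w}\bigl(\bigl(\mathcal M_\mu^{\sigma}(|f|\sigma^{-1})\bigr)^{p-1}w^{-1}\bigr)\Bigr]^{1/(p-1)}.
\]
Taking $L^p_\mu(w)$-norms, this requires only the \emph{strong} boundedness of $\mathcal M_\mu^{w}$ on $L^{p'}_\mu(w)$ and of $\mathcal M_\mu^{\sigma}$ on $L^{p}_\mu(\sigma)$, both of which are supplied by Corollary~\ref{cor1} since $p,p'>1$. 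The point you are missing is that by routing through the dual weight $\sigma$ one avoids ever needing an $L^1$ estimate for an auxiliary maximal operator.
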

\begin{proof} Corollary \ref{cor1} allows us to follow the proof used in the Euclidean setting; we provide  the details for completeness.
Assume that $p \in (1,\infty)$ and let $w$ be an $A_p(\mu)$ weight and $f \in \Lpmuw$. Define $\sigma:=w^{-\frac{1}{p-1}}$. Observe that for every $R \in \mathcal{R}$
    \begin{align}\label{1}
        \frac{1}{\mu(R)}\sum_{y \in R}|f(y)|\mu(y)=\frac{w_\mu(R)^{\frac{1}{p-1}}\sigma_\mu(R)}{\mu(R)^{p/(p-1)}}\bigg(
        \frac{\mu(R)}{w_{\mu}(R)}\bigg(\frac{1}{\sigma_\mu(R)}\sum_{y \in R} |f(y)| \mu(y) \bigg)^{p-1}\bigg)^{\frac{1}{p-1}}.
    \end{align}
    It is clear that 
    \begin{align}\label{2}
        \frac{1}{\sigma_\mu(R)}\sum_{y \in R} |f(y)| \mu(y) \le \cM_\mu^{\sigma} (f\sigma^{-1})(x), \qquad \forall x \in R,
    \end{align} thus 
    \begin{align*}
      \frac{1}{\sigma_\mu(R)}\sum_{y \in R} |f(y)| \mu(y) \le   \inf_{x \in R} \cM_\mu^\sigma (f\sigma^{-1})(x) 
    \end{align*} and
    \begin{align*}
       \mu(R)\bigg( \frac{1}{\sigma_\mu(R)}\sum_{y \in R} |f(y)| \mu(y) \bigg)^{p-1} \le \sum_{y \in R} \bigg[\cM_\mu^{\sigma} (f\sigma^{-1})(y)\bigg]^{p-1}\mu(y).
    \end{align*}
    Moreover,  by definition of $[w]_{A_p(\mu)}$
    \begin{align}\label{3}
        \frac{w_\mu(R)\sigma_\mu(R)^{p-1}}{\mu(R)^{p}} \le [w]_{A_p(\mu)}.
    \end{align} By \eqref{1}, \eqref{2}, and \eqref{3}
    \begin{align*}
        \frac{1}{\mu(R)}\sum_{y \in R} |f(y)| \mu(y) \le [w]_{A_p(\mu)}^{1/(p-1)}\bigg[\cM^w_\mu \bigg(\cM^\sigma_\mu(|f|\sigma^{-1})^{p-1}w^{-1}\bigg)(x)\bigg]^{1/(p-1)}, \qquad \forall x \in R.
    \end{align*}
    We deduce that $\mathcal{M}_\mu f \le [w]_{A_p(\mu)}^{1/(p-1)}\bigg[\cM^w_\mu \bigg(\cM^\sigma_\mu(|f|\sigma^{-1})^{p-1}w^{-1}\bigg)\bigg]^{1/(p-1)}.$ By computing the $L^p_\mu(w)$ norm, we obtain that 
    \begin{align*}
        \|\mathcal{M}_\mu f\|_{L^p_\mu(w)} &\le [w]_{A_p(\mu)}^{1/(p-1)} \|\cM^w_\mu \big(\cM^\sigma_\mu(|f|\sigma^{-1})^{p-1}w^{-1}\big)\|_{L^{p'}_\mu(w)}^{1/(p-1)} \\ 
        &\le  [w]_{A_p(\mu)}^{1/(p-1)} \|\cM^w_\mu\|_{L^{p'}_\mu(w)\to L^{p'}_\mu(w)}^{1/(p-1)}\|\cM^\sigma_\mu(|f|\sigma^{-1})^{p-1}w^{-1}\|_{L^{p'}_\mu(w)}^{1/(p-1)} \\ 
        &=[w]_{A_p(\mu)}^{1/(p-1)} \|\cM^w_\mu\|_{L^{p'}_\mu(w)\to L^{p'}_\mu(w)}^{1/(p-1)}\|\cM^\sigma_\mu(|f|\sigma^{-1})\|_{L^{p}_\mu(\sigma)} \\ 
        &\le [w]_{A_p(\mu)}^{1/(p-1)} \|\cM^w_\mu\|_{L^{p'}_\mu(w)\to L^{p'}_\mu(w)}^{1/(p-1)}\|\cM^\sigma_\mu\|_{L^{p}_\mu(\sigma)\to L^{p}_\mu(\sigma)}\|f\|_{L^{p}_\mu(w)}.
    \end{align*} The conclusion now follows by Corollary \ref{cor1}. \\ 
    If $p=1$ and $w \in A_1(\mu)$, we have that, for every $f \in L^1_\mu(w)$
      \begin{align*}
        \frac{w_\mu(R)}{\mu(R)}\sum_{y \in R} |f(y)| \mu(y)&= \frac{w_\mu(R)}{\mu(R)}\sum_{y \in R} |f(y)| \frac{w(y)}{w(y)}\mu(y) \\ &\le \frac{w_\mu(R)}{\mu(R)}\|w^{-1}\|_{L^\infty(R)}\sum_{y \in R} |f(y)| w(y)\mu(y) \\ 
        &\le [w]_{A_1(\mu)}\sum_{y \in R} |f(y)| w(y)\mu(y).
    \end{align*} This implies that 
     \begin{align*}
        \mathcal M_\mu f(x) \le [w]_{A_1(\mu)} \mathcal{M}_\mu^w f(x), \qquad \forall x \in T.
    \end{align*} Again, an application of Corollary \ref{cor1} concludes the proof.
\end{proof} In summary, we have proved the following version of the Muckenhoupt Theorem.
\begin{corollary}
    Let $w$ be a weight on $T$. Then, 
    \begin{itemize}
        \item[i)] $\cM_\mu$ is bounded on $\Lpmuw$ if and only if $w \in A_p(\mu)$;
        \item [ii)] $\cM_\mu$ is bounded from $L^1_\mu(w)$ to $L^{1,\infty}_\mu(w)$ if and only if $w \in A_1(\mu)$.
    \end{itemize}
     
\end{corollary}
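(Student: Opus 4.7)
The plan is to observe that the ``if'' directions in both (i) and (ii) are exactly the content of Theorem~\ref{Muckenhoupt-Main}, while the ``only if'' directions follow from the test-function computations sketched in the motivational discussion preceding the definition of $A_p(\mu)$. The proof therefore reduces to a careful write-up of these two reverse implications.

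For (i), suppose $\mathcal M_\mu$ is bounded on $L^p_\mu(w)$ with operator norm $C$, and fix an arbitrary admissible trapezoid $R \in \mathcal R$. The test function is $f = w^{-1/(p-1)}\chi_R$ (after possibly truncating to avoid integrability issues). Since for every $x \in R$ one has
\[
\mathcal M_\mu f(x) \ge \frac{1}{\mu(R)}\sum_{y \in R} w(y)^{-1/(p-1)}\mu(y),
\]
raising to the $p$th power and integrating against $w\mu$ restricted to $R$ gives
\[
\Bigl(\frac{1}{\mu(R)}\sum_{y \in R} w(y)^{-1/(p-1)}\mu(y)\Bigr)^p w_\mu(R) \le C^p \sum_{y \in R} w(y)^{-1/(p-1)}\mu(y),
\]
where the right-hand side uses $\|f\|_{L^p_\mu(w)}^p = \sum_{y\in R} w(y)^{-p/(p-1)}w(y)\mu(y)$. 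Dividing both sides by $\sum_{y\in R} w(y)^{-1/(p-1)}\mu(y)$ and rearranging yields $[w]_{A_p(\mu)} \le C^p$ after taking the supremum over $R\in\mathcal R$.

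For (ii), suppose $\mathcal M_\mu : L^1_\mu(w) \to L^{1,\infty}_\mu(w)$ is bounded with constant $C_1$. Fix $R \in \mathcal R$ and $y_0 \in R$, and test with the delta mass $f = \chi_{\{y_0\}}$. The average over $R$ gives $\mathcal M_\mu f(x) \ge \mu(y_0)/\mu(R)$ for every $x \in R$, so for every $\lambda < \mu(y_0)/\mu(R)$ one has $R \subset \{\mathcal M_\mu f > \lambda\}$ and hence
\[
w_\mu(R) \le w_\mu(\{\mathcal M_\mu f > \lambda\}) \le \frac{C_1}{\lambda}\,w(y_0)\,\mu(y_0).
\]
Letting $\lambda \uparrow \mu(y_0)/\mu(R)$ gives $w_\mu(R)/\mu(R) \le C_1 w(y_0)$, and taking the infimum in $y_0\in R$ (which equals $\|w^{-1}\|_{L^\infty(R)}^{-1}$) followed by the supremum over $R\in\mathcal R$ produces $[w]_{A_1(\mu)}\le C_1 < \infty$.

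The only technical subtlety is the integrability of the test function in (i): if $\sum_{y \in R} w(y)^{-1/(p-1)}\mu(y) = \infty$ for some admissible $R$, then $f \notin L^p_\mu(w)$ and the displayed inequality is vacuous. The standard remedy is to test instead on $f_n = \min\{w^{-1/(p-1)},n\}\chi_R$, which is bounded and supported on $R$, run the same computation, and pass to the limit $n \to \infty$ by monotone convergence; this simultaneously forces $\sum_{y\in R} w(y)^{-1/(p-1)}\mu(y) < \infty$ for every $R\in\mathcal R$ and delivers the $A_p(\mu)$ estimate. No further obstacles are anticipated, since the heavy lifting is carried out in Theorems~\ref{th1} and~\ref{Muckenhoupt-Main}.
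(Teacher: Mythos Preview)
Your proposal is correct and follows exactly the approach the paper intends: the Corollary is stated without proof as a summary, the ``if'' directions being Theorem~\ref{Muckenhoupt-Main} and the ``only if'' directions being the test-function computations sketched just before the definition of $A_p(\mu)$ (testing on $\chi_R w^{-p'/p}=\chi_R w^{-1/(p-1)}$ for $p>1$ and on a point mass for $p=1$). Your write-up of the necessity arguments, including the truncation remedy for the integrability of the test function in~(i), is a faithful and slightly more detailed rendering of what the paper leaves to the reader.
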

\section{Reverse H\"older inequality and $A_\infty$ weights}\label{secRevHolder}
In this section we assume that $\mu$ is a locally doubling flow measure. Under this additional assumption, which in turn implies uniform boundedness of the (local) geometry of the tree, a Calder\'on-Zygmund theory was developed on $(T,d,\mu)$ using admissible trapezoids instead of metric balls (see \cite[Section 3]{LSTV}). In particular, there exist a constant $C_D\in (0,1)$ and an integer $N$ such that for every admissible trapezoid $R$ there are at most $M$ disjoint $R_i\in\mathcal R$, $i=1,\dots,M$, such that $M \le N$ and 
\begin{align}\label{algdecomposition}
R=\bigcup_{i=1}^M R_i\qquad {\rm{and}}\qquad   \frac{\mu(R_i)}{\mu (R)}\geq C_D\qquad \forall i=1,\dots,M.
\end{align}
The previous fact and a stopping-time argument provide the following useful result (see  \cite[Lemma 3.5]{LSTV}).
\begin{lemma} \label{lem:CZdec}   Let $f$ be a function on $T$, $\lambda > 0$  and $R \in \mathcal{R}$ such that 
$$\frac{1}{\mu(R)} \sum_{y \in R}|f(y) | \mu(y) < \lambda.$$ Then, there
exist a {(possibly empty)} family $\mathcal{F}$  of disjoint admissible trapezoids and a constant $D_{CZ}$ such that for each $E \in \mathcal{F}$ the following
hold: 
\begin{itemize}
    \item [i)] $\frac{1}{\mu(E)} \sum_{y \in E} |f (y)|\mu(y) \ge \lambda$;
 \item[ii)] $\frac{1}{\mu(E)} \sum_{y \in E}
|f(y) | \mu(y) < 
D_{CZ}\lambda$; \item[iii)]  if $x \in R \setminus \cup_{E\in \mathcal F} E,$ then $|f (x)| < \lambda$.
\end{itemize}
\end{lemma}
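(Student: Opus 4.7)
The plan is to run a standard Calder\'on--Zygmund-type stopping-time procedure driven by the algebraic decomposition \eqref{algdecomposition}. First I would apply \eqref{algdecomposition} to $R$, obtaining at most $N$ disjoint admissible sub-trapezoids $R_1,\dots,R_M$ covering $R$, each with $\mu(R_i)/\mu(R)\ge C_D$. For each child I inspect the $\mu$-average of $|f|$: if this average is $\ge \lambda$, I halt that branch and place the trapezoid into $\mathcal F$; otherwise, I recursively re-apply \eqref{algdecomposition}. Because \eqref{algdecomposition} produces proper subtrapezoids and singletons $\{x\}\in\mathcal R$ admit no further decomposition, every branch terminates after finitely many steps, so the procedure is well-defined.

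Property $(i)$ is immediate from the halting rule. For $(ii)$, note that each $E\in \mathcal F$ arises as a child of some admissible trapezoid $E'$ that was further decomposed, hence whose average of $|f|$ was strictly less than $\lambda$ (the case $E=R$ is excluded because $R$ itself fails the halting condition by hypothesis). Using $\mu(E)/\mu(E')\ge C_D$ from \eqref{algdecomposition},
\[
\frac{1}{\mu(E)}\sum_{y\in E}|f(y)|\mu(y)\le \frac{\mu(E')}{\mu(E)}\cdot\frac{1}{\mu(E')}\sum_{y\in E'}|f(y)|\mu(y)< \frac{\lambda}{C_D},
\]
so $D_{CZ}:=1/C_D$ satisfies the required bound.

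For $(iii)$, fix $x\in R\setminus\bigcup_{E\in\mathcal F}E$. Then at every step of the recursion, $x$ lies in exactly one of the children produced by \eqref{algdecomposition} and that child is not halted; otherwise $x$ would belong to some element of $\mathcal F$. Following the branch through $x$ until it terminates at the singleton $\{x\}\in\mathcal R$ (which, by assumption, is also not added to $\mathcal F$), the halting condition must fail there. Since the $\mu$-average of $|f|$ on the singleton $\{x\}$ equals $|f(x)|$, this yields $|f(x)|<\lambda$.

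The main obstacle I anticipate is book-keeping for the recursion: one needs to confirm that \eqref{algdecomposition} can be iterated so that the nested decomposition actually exhausts $R$ down to singletons in finitely many steps, and that the family $\mathcal F$ obtained by collecting the halted trapezoids across all branches remains pairwise disjoint. Both points follow from the construction of the decomposition in \cite[Section 3]{LSTV}, which produces a finite partition at each step into proper admissible subtrapezoids, but this combinatorial termination is the only non-formal ingredient in the argument.
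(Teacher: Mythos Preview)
Your proposal is correct and follows exactly the approach the paper indicates: the paper does not spell out a proof but simply says the lemma follows from the algebraic decomposition \eqref{algdecomposition} via a stopping-time argument, referring to \cite[Lemma~3.5]{LSTV}. Your write-up is precisely that stopping-time argument, with the correct constant $D_{CZ}=1/C_D$, and your closing remarks accurately identify the only nontrivial ingredient---that the recursive decomposition from \cite{LSTV} terminates at singletons in finitely many steps (guaranteed here since the locally doubling assumption forces bounded degree, so each trapezoid is finite).
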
 
We aim to prove a reverse H\"older inequality. We start by recalling the following preliminary lemma which follows by a classical argument (see \cite[Lemma 9.2.1]{Gra} and Proposition \ref{prop:Ap} $vi)$).
\begin{lemma}\label{lem:pre-reverse} Let $w\in A_p(\mu)$ for some $p \in [1,\infty)$. Then, for every $ \xi \in (0,1)$ and $S \subset R \in \mathcal{R}$ such that $\mu(S) \le \xi \mu(R),$ it follows that $w_\mu(S) \le \Big(1-\frac{(1-\xi)^p}{[w]_{A_p(\mu)}}\Big) w_\mu(R).$
\end{lemma}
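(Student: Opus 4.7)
The plan is to deploy the equivalent characterization of $A_p(\mu)$ weights from Proposition~\ref{prop:Ap}(vi) with the indicator of the complement as a test function. Since (vi) is valid for every $p \in [1,\infty)$, the same argument will cover both the $p>1$ and $p=1$ cases uniformly.

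Let $R \in \mathcal{R}$, $\xi \in (0,1)$, and $S \subset R$ with $\mu(S) \le \xi \mu(R)$. I would apply Proposition~\ref{prop:Ap}(vi) with $f := \chi_{R \setminus S}$ on the trapezoid $R$. The numerator of the ratio in (vi) becomes $\bigl(\mu(R \setminus S)/\mu(R)\bigr)^p$ and the denominator becomes $w_\mu(R \setminus S)/w_\mu(R)$, so the characterization yields
\begin{align*}
\left(\frac{\mu(R \setminus S)}{\mu(R)}\right)^p \le [w]_{A_p(\mu)} \, \frac{w_\mu(R \setminus S)}{w_\mu(R)}.
\end{align*}
Using $\mu(R \setminus S) = \mu(R) - \mu(S) \ge (1-\xi)\mu(R)$ and $w_\mu(R \setminus S) = w_\mu(R) - w_\mu(S)$, this simplifies to
\begin{align*}
(1-\xi)^p \le [w]_{A_p(\mu)} \left(1 - \frac{w_\mu(S)}{w_\mu(R)}\right),
\end{align*}
and solving for $w_\mu(S)/w_\mu(R)$ produces exactly the claimed bound.

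There is no real obstacle here: the entire argument hinges on having the "testing" form of the $A_p(\mu)$ condition already available from Proposition~\ref{prop:Ap}(vi). If one prefers to avoid invoking (vi) in the $p=1$ case, the same conclusion follows from the definition of $[w]_{A_1(\mu)}$ together with the pointwise bound $w_\mu(R\setminus S) \ge \mu(R\setminus S)\,\|w^{-1}\|_{L^\infty(R)}^{-1}$, which yields the $p=1$ version of the displayed inequality after rearrangement.
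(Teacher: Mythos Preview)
Your proof is correct and is precisely the approach the paper intends: the paper merely states that the lemma follows from the classical argument in \cite[Lemma 9.2.1]{Gra} together with Proposition~\ref{prop:Ap}(vi), which is exactly the application of (vi) with $f=\chi_{R\setminus S}$ that you carry out.
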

Now we can prove the reverse H\"older inequality in our setting.
\begin{theorem}[Reverse H\"older inequality]\label{revholder} Let $w \in A_p(\mu)$ for some $p \in [1,\infty)$. Then, there are $C,\varepsilon>0$ such that for every $R \in \mathcal{R}$ 
\begin{align*}
   \bigg( \frac{1}{\mu(R)} \sum_{y \in R} w(y)^{1+\varepsilon}\mu(y) \bigg)^{1/(1+\varepsilon)} \le \frac{C}{\mu(R)} \sum_{y \in R} w(y)\mu(y).
\end{align*}
\end{theorem}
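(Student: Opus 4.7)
The plan is to mimic the classical Coifman--Fefferman argument using the Calder\'on--Zygmund decomposition of Lemma~\ref{lem:CZdec} iteratively, taking advantage of Lemma~\ref{lem:pre-reverse} to convert a $\mu$-decay into a $w_\mu$-decay. Fix $R\in\mathcal{R}$ and set $\alpha_0:=\frac{2D_{CZ}}{\mu(R)}\sum_{y\in R} w(y)\mu(y)$ and $\alpha_k:=(2D_{CZ})^{k}\alpha_0$ for $k\ge 0$, so that $\frac{1}{\mu(R)}\sum_{y\in R} w(y)\mu(y)<\alpha_0$. Apply Lemma~\ref{lem:CZdec} to $w$ on $R$ at level $\alpha_0$ and obtain a disjoint family $\mathcal{F}_0\subset \mathcal{R}$ of admissible trapezoids inside $R$. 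Now proceed recursively: given $\mathcal{F}_k$ with the property that $\frac{1}{\mu(E)}\sum_{y\in E}w(y)\mu(y)<D_{CZ}\alpha_k<\alpha_{k+1}$ for every $E\in \mathcal{F}_k$ (which holds by part $ii)$ of Lemma~\ref{lem:CZdec}), apply the same lemma to $w\chi_E$ on each $E\in\mathcal{F}_k$ at level $\alpha_{k+1}$, and let $\mathcal{F}_{k+1}(E)$ be the resulting family; set $\mathcal{F}_{k+1}:=\bigcup_{E\in \mathcal{F}_k}\mathcal{F}_{k+1}(E)$. By construction each element of $\mathcal{F}_{k+1}$ is contained in a unique parent in $\mathcal{F}_k$. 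Put $\Omega_k:=\bigcup_{E\in \mathcal{F}_k}E$.

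Next, I would establish the key geometric decay. For every $E\in\mathcal{F}_k$, summing $\mu(E')\le \alpha_{k+1}^{-1}\sum_{y\in E'}w(y)\mu(y)$ over $E'\in\mathcal{F}_{k+1}(E)$ (property $i)$) and using property $ii)$ for $E$ yields
\begin{equation*}
\mu(\Omega_{k+1}\cap E)\le \frac{D_{CZ}\alpha_k}{\alpha_{k+1}}\,\mu(E)=\frac{1}{2}\mu(E).
\end{equation*}
Lemma~\ref{lem:pre-reverse} applied with $\xi=1/2$ then gives $w_\mu(\Omega_{k+1}\cap E)\le \theta\, w_\mu(E)$, where $\theta:=1-2^{-p}/[w]_{A_p(\mu)}\in(0,1)$. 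Summing over the disjoint family $\mathcal{F}_k$ leads to $w_\mu(\Omega_{k+1})\le \theta\, w_\mu(\Omega_k)$, and iterating,
\begin{equation*}
w_\mu(\Omega_k)\le \theta^{k}\,w_\mu(\Omega_0)\le \theta^{k}\,w_\mu(R).
\end{equation*}

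To conclude, I would decompose $R=(R\setminus\Omega_0)\cup\bigcup_{k\ge 0}(\Omega_k\setminus \Omega_{k+1})$. Property $iii)$ of Lemma~\ref{lem:CZdec}, applied inside $R$ and then inside each parent $E\in \mathcal{F}_k$, shows that $w(x)<\alpha_0$ on $R\setminus \Omega_0$ and $w(x)<\alpha_{k+1}$ on $\Omega_k\setminus\Omega_{k+1}$. Hence for any $\varepsilon>0$,
\begin{equation*}
\sum_{y\in R} w(y)^{1+\varepsilon}\mu(y)\le \alpha_0^{\varepsilon}\,w_\mu(R)+\sum_{k\ge 0}\alpha_{k+1}^{\varepsilon}\,w_\mu(\Omega_k)\le \alpha_0^\varepsilon\,w_\mu(R)\Bigl(1+(2D_{CZ})^\varepsilon\sum_{k\ge 0}\bigl((2D_{CZ})^\varepsilon\theta\bigr)^{k}\Bigr).
\end{equation*}
Choosing $\varepsilon$ so small that $(2D_{CZ})^\varepsilon\theta<1$ makes the series converge; since $\alpha_0\approx \mu(R)^{-1}w_\mu(R)$, dividing by $\mu(R)$ and taking $(1+\varepsilon)$-th roots delivers the reverse H\"older inequality with a constant depending on $\beta$, $p$, $D_{CZ}$, and $[w]_{A_p(\mu)}$.

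The main obstacle is the construction in the first paragraph: the admissible trapezoids are not dyadic, so one has to generate a dyadic-like nested hierarchy by iterating Lemma~\ref{lem:CZdec} \emph{inside} each trapezoid of the previous generation. This works because each $E\in\mathcal{F}_k$ is itself an admissible trapezoid and the stopping-time hypothesis $\frac{1}{\mu(E)}\sum_{E}w\mu<\alpha_{k+1}$ is guaranteed by property $ii)$ together with the choice $\alpha_{k+1}=2D_{CZ}\alpha_k$. Once this nested structure is in place, the rest of the argument is the standard geometric-series estimate.
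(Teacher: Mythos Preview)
Your proof is correct and follows essentially the same Coifman--Fefferman scheme as the paper: iterate the Calder\'on--Zygmund decomposition at geometrically increasing thresholds, use Lemma~\ref{lem:pre-reverse} to turn the $\mu$-decay $\mu(\Omega_{k+1}\cap E)\le \tfrac12\mu(E)$ into $w_\mu$-decay, and sum a geometric series. The only noteworthy difference is how the nestedness $\mathcal{F}_{k+1}\subset \mathcal{F}_k$ is obtained: the paper applies Lemma~\ref{lem:CZdec} to $R$ at every level $\gamma_k$ and invokes the stopping-time nature of that lemma to conclude that each $E'\in\mathcal{F}_{k+1}$ sits inside some $E\in\mathcal{F}_k$, whereas you build the hierarchy recursively by running the decomposition inside each parent, so nesting is automatic. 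Your choice $\alpha_{k+1}=2D_{CZ}\alpha_k$ is simply the paper's argument specialised to $\gamma=1/2$.
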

\begin{proof}
    Pick $R \in \mathcal{R}$ and define 
    \begin{align*}
        \gamma_0=\frac{1}{\mu(R)}\sum_{y \in R} w(y) \mu(y).
    \end{align*} We fix $\gamma \in (0,1)$  and we define $\gamma_{k}=(D_{CZ} \gamma^{-1})^k\gamma_0$. We apply Lemma \ref{lem:CZdec} with  $\lambda=\gamma_k$ and $f=w$ and we call $\mathcal{F}_k$ the obtained family of mutually disjoint admissible trapezoids. Then, for every $E \in \mathcal{F}_k$, the following properties are satisfied 
  \begin{itemize}
      \item[$a)$] $\gamma_k<\frac{1}{\mu(E)}\sum_{y \in E} w(y) \mu(y) \le D_{CZ} \gamma_k;$ 
      \item[$b)$] $w \le \gamma_k$ on $R \setminus U_k$ where $U_k:=\cup_{E \in \mathcal{F}_k} E;$ 
      \item[$c)$] each $E' \in \mathcal{F}_{k+1}$ is contained in some $E \in \mathcal{F}_k$.
  \end{itemize} Indeed, $a),b)$ are immediate consequences of Lemma \ref{lem:CZdec} $i),ii)$, and $c)$ follows by recalling the stopping-time nature of the proof for Lemma \ref{lem:CZdec}. Observe that, if $w=\gamma_0$ on $R$, then the reverse H\"older inequality is trivially satisfied. So we can assume that $w$ is not constant on $R$, so that $U_0 \not=\emptyset$.
  Let $k_f=k_f(R,w)$ be the smallest positive integer  such that $U_{k_f}=\emptyset$, 
   which exists because $w \in L^{\infty}(R)$.
    We show that for every $k<k_f$ and $E\in \mathcal{F}_k$,
    \begin{equation}\label{ineq-Fk}
    \mu(E \cap U_{k+1}) < \gamma \mu(E).
    \end{equation}
    This is trivial when $k=k_f-1$, whereas for every $k<k_f-1$,
  \begin{align*}
      D_{CZ} \gamma_k &\ge \frac{1}{\mu(E)}\sum_{y\in E} w(y) \mu(y)\\ & \ge \frac{1}{\mu(E)}\sum_{y\in E \cap U_{k+1}} w(y) \mu(y) \\ &=\frac{1}{\mu(E)}\sum_{\substack{ E' \in \mathcal{F}_{k+1} \\ E'\subset E}} \frac{\mu(E')}{\mu(E')}\sum_{y \in E'} w(y) \mu(y)  \\ 
      &>\frac{\mu(E \cap U_{k+1})}{\mu(E)}\gamma_{k+1} \\ 
      &=\frac{\mu(E \cap U_{k+1})}{\mu(E)}D_{CZ}\gamma^{-1}\gamma_{k}.
  \end{align*} 
  By~\eqref{ineq-Fk} and Lemma \ref{lem:pre-reverse}, we conclude that
  \begin{align*}
      \frac{w_\mu(E \cap U_{k+1})}{w_\mu(E)}\le \eta=1-\frac{(1-\gamma)^p}{[w]_{A_p(\mu)}}.
  \end{align*} Taking the sum over all $E \in \mathcal{F}_k$, 
  \begin{align*}
      w_\mu(U_{k+1}) \le \eta w_\mu(U_k), \qquad \forall k<k_f.
  \end{align*} 
   It is clear that 
  \begin{align*}
      R= (R \setminus U_0) \bigcup (\bigcup_{k=0}^{k_f-1} U_k \setminus U_{k+1}), 
  \end{align*} thus 
  \begin{align*}
      \sum_{y \in R} w(y)^{1+\varepsilon}\mu(y)&=\sum_{y \in R \setminus U_0} w(y)^{\varepsilon}w(y)\mu(y)+\sum_{k=0}^{k_f-1}\sum_{y \in U_k \setminus U_{k+1}} w(y)^{\varepsilon}w(y) \mu(y)\\ &\le \gamma_0^\varepsilon w_\mu(R \setminus U_0)+\sum_{k=0}^{k_f-1}\gamma_{k+1}^\varepsilon w_\mu(U_k)\\ 
      &\le \gamma_0^\varepsilon w_\mu(R \setminus U_0)+\sum_{k=0}^{k_f-1}[(D_{CZ}\gamma^{-1})^{k+1}\gamma_0]^\varepsilon \eta^k w_\mu(U_0) \\ 
      &\le w_\mu(R)\gamma_0^\varepsilon\bigg(1+(D_{CZ}\gamma^{-1})^\varepsilon\sum_{k=0}^{k_f-1} (D_{CZ}\gamma^{-1})^{\varepsilon k} \eta^k\bigg) \\ 
      &\le C\bigg(\frac{1}{\mu(R)} \sum_{y \in R} w(y) \mu(y)\bigg)^{\varepsilon}  \sum_{y \in R} w(y) \mu(y),
  \end{align*} where $C$ does not depend on $R$ provided $(D_{CZ}\gamma^{-1})^{\varepsilon}\eta<1.$ Notice that this is possible since $\eta<1$ and the fact that we can choose $\varepsilon>0$ small enough.
\end{proof} 

A straightforward adaptation of the proof above implies the following variant of  Theorem \ref{revholder}, which will be useful later.
\begin{proposition}\label{rem:revhold} Let $w$ be a weight such that there exist $\xi,\eta \in (0,1)$ satisfying
\begin{align}\label{eq:weakerassumption}
         \mu(S) < \xi \mu(R) \Longrightarrow w_\mu(S) < w_\mu(R),
    \end{align} for every $R \in \mathcal{R}$ and $S \subset R$.  Then, there are  $C,\varepsilon>0$ such that for every $R \in \cR$
    \begin{align*}
   \bigg( \frac{1}{\mu(R)} \sum_{y \in R} w(y)^{1+\varepsilon}\mu(y) \bigg)^{1/(1+\varepsilon)} \le \frac{C}{\mu(R)} \sum_{y \in R} w(y)\mu(y).
\end{align*}
\end{proposition}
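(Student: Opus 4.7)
My plan is to observe that the proof of Theorem \ref{revholder} uses the $A_p(\mu)$ hypothesis only through Lemma \ref{lem:pre-reverse}, whose conclusion is precisely an implication of the form $\mu(S)<\xi\mu(R) \Rightarrow w_\mu(S)\le \eta\,w_\mu(R)$ with $\eta=1-(1-\xi)^p/[w]_{A_p(\mu)}\in(0,1)$. The statement \eqref{eq:weakerassumption} (read, as the context dictates, with $\eta\,w_\mu(R)$ on the right-hand side) abstracts exactly this implication, so the remainder of the argument should transfer verbatim once the parameter $\gamma$ in the proof of Theorem \ref{revholder} is specialized to the given $\xi$.

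Concretely, I would fix $R\in\cR$, set $\gamma_0 = w_\mu(R)/\mu(R)$ and $\gamma_k = (D_{CZ}\xi^{-1})^k\gamma_0$, and at each level apply Lemma \ref{lem:CZdec} to $f=w$ with threshold $\lambda=\gamma_k$ to produce the families $\mathcal F_k$ of mutually disjoint admissible trapezoids and their unions $U_k = \bigcup_{E\in\mathcal F_k}E$. Since $w$ is bounded on the finite set $R$, the iteration terminates at some smallest $k_f$ with $U_{k_f}=\emptyset$. Repeating verbatim the stopping-time computation of Theorem \ref{revholder} (which relies only on properties $a)$--$c)$ and the ratio $\gamma_{k+1}/\gamma_k = D_{CZ}\xi^{-1}$) gives
\begin{align*}
\mu(E\cap U_{k+1}) < \xi\,\mu(E) \qquad \text{for every }E\in\mathcal F_k,\ k<k_f.
\end{align*}

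The next step is to feed this into \eqref{eq:weakerassumption}, taking $E$ as the ambient trapezoid and $S = E\cap U_{k+1}\subset E$, to deduce $w_\mu(E\cap U_{k+1}) < \eta\,w_\mu(E)$. Summing over $\mathcal F_k$ then yields $w_\mu(U_{k+1})\le \eta\,w_\mu(U_k)$ for each $k<k_f$, which is the same geometric decay used previously. Decomposing $R = (R\setminus U_0)\cup\bigcup_{k=0}^{k_f-1}(U_k\setminus U_{k+1})$, using the pointwise bound $w\le\gamma_k$ on $R\setminus U_k$ provided by property $b)$, and summing the resulting geometric series exactly as at the end of the proof of Theorem \ref{revholder}, I obtain
\begin{align*}
\sum_{y\in R} w(y)^{1+\varepsilon}\mu(y) \le C\bigg(\frac{1}{\mu(R)}\sum_{y\in R}w(y)\mu(y)\bigg)^{\!\varepsilon}\sum_{y\in R}w(y)\mu(y),
\end{align*}
valid whenever $\varepsilon>0$ is chosen so small that $(D_{CZ}\xi^{-1})^{\varepsilon}\eta<1$; such $\varepsilon$ exists because $\eta<1$. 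There is no genuine obstacle here: the only points requiring care are the choice $\gamma=\xi$ (legitimate since $\xi\in(0,1)$ is the sole constraint on $\gamma$ in the original argument) and the observation that, since $\eta$ is no longer tied to any $A_p$ constant but given as a hypothesis, the geometric-series convergence still holds.
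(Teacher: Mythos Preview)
Your proposal is correct and is exactly the adaptation the paper has in mind: the only use of the $A_p(\mu)$ hypothesis in the proof of Theorem \ref{revholder} is through Lemma \ref{lem:pre-reverse}, and you rightly replace that step by the assumed implication \eqref{eq:weakerassumption} (with the evident reading $w_\mu(S)<\eta\,w_\mu(R)$, the $\eta$ having otherwise no role), specializing $\gamma=\xi$. The rest of the stopping-time argument and the geometric-series estimate carry over verbatim, precisely as the paper indicates.
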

 The following are easy consequences of Theorem \ref{revholder}.
\begin{corollary} The following hold: 
\begin{itemize} 
    \item[i)] 
let $w \in A_p(\mu)$ for some $p \in [1,\infty)$. Then, there exists $\varepsilon>0$ such that $w^{1+\varepsilon} \in A_p(\mu)$; 
\item[ii)] if $p \in (1,\infty),$ then  $$A_p(\mu)=\bigcup_{s \in (1,p)} A_s(\mu).$$
\end{itemize}
\end{corollary}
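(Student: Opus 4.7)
The plan is to deduce both statements from Theorem \ref{revholder} (reverse H\"older) combined with the self-duality of $A_p(\mu)$ from Proposition \ref{prop:Ap} (ii). The inclusion $\bigcup_{s\in(1,p)}A_s(\mu)\subseteq A_p(\mu)$ in (ii) is already contained in Proposition \ref{prop:Ap} (iv), so the real content is (i) together with the opposite inclusion in (ii).

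For (i), the strategy is to apply the reverse H\"older inequality twice. For $p>1$, set $\sigma:=w^{-1/(p-1)}$; by Proposition \ref{prop:Ap} (ii), $\sigma\in A_{p'}(\mu)$, so Theorem \ref{revholder} applies to both $w$ and $\sigma$ and produces exponents $\varepsilon_1,\varepsilon_2>0$ together with a uniform constant controlling the $L^{1+\varepsilon_i}$-mean of each weight over any $R\in\cR$ by its $L^1$-mean. Taking $\varepsilon:=\min(\varepsilon_1,\varepsilon_2)$ (so that Jensen propagates each inequality to the common exponent) and multiplying the resulting estimates produces $[w^{1+\varepsilon}]_{A_p(\mu)}\le C[w]_{A_p(\mu)}^{1+\varepsilon}$, as required. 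For $p=1$, only the first reverse H\"older is needed: the factor $\|w^{-(1+\varepsilon)}\|_{L^\infty(R)}$ is just $\|w^{-1}\|_{L^\infty(R)}^{1+\varepsilon}$, so the $A_1$ constant of $w^{1+\varepsilon}$ is directly bounded by $(C[w]_{A_1(\mu)})^{1+\varepsilon}$.

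For the remaining inclusion in (ii), the key observation is that improving the exponent of $\sigma$ via (i) corresponds precisely to lowering $p$ for $w$. Given $w\in A_p(\mu)$, I would apply (i) to $\sigma\in A_{p'}(\mu)$ to obtain $\varepsilon>0$ with $\sigma^{1+\varepsilon}\in A_{p'}(\mu)$, and then define $s\in(1,p)$ by $s-1:=(p-1)/(1+\varepsilon)$. Its conjugate $s'=1+(1+\varepsilon)/(p-1)$ exceeds $p'$, so Proposition \ref{prop:Ap} (iv) upgrades $\sigma^{1+\varepsilon}$ to $A_{s'}(\mu)$. Since $\sigma^{1+\varepsilon}=w^{-1/(s-1)}$, one more application of Proposition \ref{prop:Ap} (ii) rephrases this as $w\in A_s(\mu)$.

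No serious obstacle is expected, as the argument is essentially exponent bookkeeping. The one point that requires care is in (ii): one must apply (i) to $\sigma$ rather than to $w$, since improving $w$ directly keeps $p$ fixed, whereas improving $\sigma$ creates exactly the downward shift in $s$ that the conclusion demands.
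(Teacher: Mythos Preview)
Your proof of (i) is correct and essentially identical to the paper's: apply Theorem \ref{revholder} to both $w$ and $\sigma=w^{-1/(p-1)}$, take the common exponent $\varepsilon=\min(\varepsilon_1,\varepsilon_2)$, and multiply; the case $p=1$ is handled exactly as you describe.

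For (ii) your argument is correct but takes a different route from the paper. You apply (i) to the dual weight $\sigma\in A_{p'}(\mu)$ to obtain $\sigma^{1+\varepsilon}\in A_{p'}(\mu)$, then use monotonicity (Proposition \ref{prop:Ap} (iv)) to pass to $A_{s'}(\mu)$ and duality (Proposition \ref{prop:Ap} (ii)) to conclude $w\in A_s(\mu)$. The paper instead applies (i) to $w$ itself, obtaining $w^{1+\varepsilon}\in A_p(\mu)$, and then invokes a separate H\"older-interpolation claim, namely $[v^{\delta}]_{A_s(\mu)}\le [v]_{A_p(\mu)}^{\delta}$ for $\delta=1/(1+\varepsilon)$ and $s=p\delta+1-\delta$, applied with $v=w^{1+\varepsilon}$. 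Both routes land on the same $s=(p+\varepsilon)/(1+\varepsilon)$ and yield the same quantitative bound $[w]_{A_s(\mu)}\le C[w]_{A_p(\mu)}$. Your approach is arguably cleaner since it avoids proving the auxiliary interpolation inequality; the paper's approach has the minor advantage of isolating that inequality as a reusable fact. One small correction to your closing comment: it is not true that one \emph{must} apply (i) to $\sigma$ rather than to $w$---the paper shows that applying (i) to $w$ does work, it simply requires the extra interpolation step to translate $w^{1+\varepsilon}\in A_p(\mu)$ into $w\in A_s(\mu)$.
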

\begin{proof} The proof of these facts is standard, but we give all the details for completeness.
    Assume $w\in A_1(\mu)$. Then, for all $R \in \cR$, by Theorem \ref{revholder} 
    \begin{align*}
        \frac{1}{\mu(R)}\sum_{y \in R} w(y)^{1+\varepsilon} \mu(y) \le C[w]_{A_1(\mu)}^{1+\varepsilon} w(x)^{1+\varepsilon}, \qquad \forall x \in R,
    \end{align*} namely, $w^{1+\varepsilon} \in A_1(\mu)$. \\ If $p>1$, we apply again Theorem \ref{revholder} to the weights $w \in A_p(\mu)$ and $w^{-1/(p-1)} \in A_{p'}(\mu)$. Thus, we have that there exist $\varepsilon_1,\varepsilon_2 \in (0,1)$ such that
     \begin{align}
         \label{stella1}&\bigg(\frac{1}{\mu(R)}\sum_{y \in R} w(y)^{1+\varepsilon_1} \mu(y) \bigg)^{1/(1+\varepsilon_1)}\le \frac{C}{\mu(R)}\sum_{y \in R} w(y) \mu(y), \\ 
        \label{stella2} &\bigg(\frac{1}{\mu(R)}\sum_{y \in R} w(y)^{-(1+\varepsilon_2)/(p-1)} \mu(y) \bigg)^{1/(1+\varepsilon_2)}\le \frac{C}{\mu(R)}\sum_{y \in R} w(y)^{-1/(p-1)} \mu(y). 
    \end{align} Set $\varepsilon:=\min\{\varepsilon_1,\varepsilon_2\}$ and observe that, by H\"older's inequality, \eqref{stella1} and \eqref{stella2} are both satisfied with $\varepsilon$ in place of $\varepsilon_1$ and $\varepsilon_2$. Hence, 
    \begin{align*}
        &\bigg(\frac{1}{\mu(R)}\sum_{y \in R} w(y)^{1+\varepsilon} \mu(y) \bigg)\bigg(\frac{1}{\mu(R)}\sum_{y \in R} w(y)^{-(1+\varepsilon)/(p-1)} \mu(y) \bigg)^{p-1} \\ &\le  C_{p,\varepsilon} \bigg[\frac{1}{\mu(R)}\sum_{y \in R} w(y) \mu(y) \bigg( \frac{C}{\mu(R)}\sum_{y \in R} w(y)^{-1/(p-1)} \mu(y) \bigg)^{p-1}\bigg]^{1+\varepsilon},
    \end{align*}
    
   which implies that $w^{1+\varepsilon} \in A_p(\mu)$ and \begin{align}\label{w1+epsilon}
        [w^{1+\varepsilon}]_{A_p(\mu)} \le C_{p,\varepsilon}[w]_{A_p(\mu)}^{1+\varepsilon}.
    \end{align} This proves $i)$.  \\ Next, given $w \in A_p(\mu)$ and $\varepsilon \in (0,1)$ as in the proof of $i)$, we set $\delta=\frac{1}{1+\varepsilon}$ and $s=p\delta+1-\delta=\frac{p+\varepsilon}{1+\varepsilon}$. We claim that 
    \begin{align}\label{claimdelta}
        [w^\delta]_{A_s(\mu)} \le [w]_{A_p(\mu)}^\delta.
    \end{align} This easily follows by definition of $A_p(\mu)$. Indeed, 
    \begin{align*}
        [w^\delta]_{A_s(\mu)}&= \sup_{R \in \mathcal{R}} \bigg(\frac{1}{\mu(R)}\sum_{y \in R} w(y)^{\delta} \mu(y) \bigg)\bigg(\frac{1}{\mu(R)}\sum_{y \in R} w(y)^{-\delta/(s-1)} \mu(y) \bigg)^{s-1} \\ 
        &\le \sup_{R \in \mathcal{R}} \bigg(\frac{1}{\mu(R)}\sum_{y \in R} w(y) \mu(y) \bigg)^{\delta}\bigg(\frac{1}{\mu(R)}\sum_{y \in R} w(y)^{-1/(p-1)} \mu(y) \bigg)^{\delta(p-1)}\\ &=[w]_{A_p(\mu)}^\delta, 
    \end{align*} where in the above inequality we have used H\"older's inequality. By \eqref{claimdelta} applied to the weight $w^{1/\delta} \in A_p(\mu)$ instead of $w$ and  \eqref{w1+epsilon}, we conclude that 
    \begin{align*}
        [w]_{A_s(\mu)}\le [w^{1+\varepsilon}]_{A_p(\mu)}^{1/(1+\varepsilon)}\le C_{p,\varepsilon}[w]_{A_p(\mu)}.
    \end{align*} Since $1<s<p$, the desired result follows.
\end{proof}
We underline that if $w \in A_p(\mu)$, then $w \mu$ is not, in general, a flow measure or a doubling measure, so a variant of Lemma \ref{lem:CZdec} for the measure $w\mu$ is needed. 
%
To do so, we will work with weights satisfying the following assumption.

\smallskip

 {\bf Assumption $1$}: $w$ is a weight and $\eta \in (0,1)$ is such that for every $R \in \mathcal{R}$ and $S \subset R$
    \begin{align}\label{hp:CZbis}
        \mu(S) \le (1-C_{D})\mu(R) \Longrightarrow w_\mu(S) \le \eta w_\mu(R),
    \end{align}  
    where $C_D$ is as in \eqref{algdecomposition}. \\  Observe that Assumption $1$ implies the following:  for some $\alpha \in (0,1)$ there exists $\beta \in (0,1)$ such that for every $R \in \mathcal{R}$ and $S \subset R$
    \begin{align}\label{assumrevw-1}
        w_\mu(S) < \alpha w_\mu(R) \Longrightarrow \mu(S) < \beta \mu(R).
    \end{align} Indeed, if $w_\mu(S)>\eta w_\mu(R)$, then by Assumption 1 necessarily $\mu(S)>(1-C_{D}) \mu(R).$ Since $w_\mu(S)>\eta w_\mu(R)$ if and only if $w_\mu(R \setminus S) < (1-\eta)w_\mu(R)$ and an analogous inequality follows for $\mu$, we have that Assumption $1$ implies 
    \begin{align*}
        w_\mu(S) < (1-\eta) w_\mu(R) \Longrightarrow \mu(S) < C_{D}\mu(R).
    \end{align*}
Under Assumption 1, we can prove the following variant of Lemma \ref{lem:CZdec} for the measure $w\mu$.
\begin{proposition}\label{newCZdec}
     For every function $f$ on $T$, $\lambda > 0$  and $R \in \mathcal{R}$ such that 
$\frac{1}{w_\mu(R)} \sum_{y \in R}
|f(y) | w(y)\mu(y) < \lambda$ there
exist a ({possibly empty}) family $\mathcal{F}$ of disjoint admissible trapezoids and a constant $D_{CZ}'$ such that for each $E \in \mathcal{F}$ the following 
hold: 
\begin{itemize}
    \item [i)] $\frac{1}{w_\mu(E)} \sum_{y \in E} |f (y)|w(y)\mu(y) \ge \lambda$;
 \item[ii)] $\frac{1}{w_\mu(E)} \sum_{y \in E}
|f(y) | w(y)\mu(y) < 
D_{CZ}'\lambda$; \item[iii)]  if $x \in R \setminus \cup_{E\in \mathcal F} E,$ then $|f (x)| < \lambda$.
\end{itemize}
\end{proposition}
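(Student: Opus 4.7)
The plan is to mimic the stopping-time proof of Lemma \ref{lem:CZdec} but measuring averages with respect to $w\mu$ instead of $\mu$. The only genuinely new ingredient needed is a $w\mu$-analog of the quantitative lower bound $\mu(R_i)/\mu(R)\ge C_D$ built into the algorithmic decomposition \eqref{algdecomposition}, and this is exactly what Assumption $1$ is designed to supply.

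First I would establish the key consequence of Assumption $1$: if $R\in\mathcal R$ and $R=\bigcup_{i=1}^M R_i$ is the decomposition in \eqref{algdecomposition}, then $\mu(R\setminus R_i)\le(1-C_D)\mu(R)$ for each $i$, so applying \eqref{hp:CZbis} with $S=R\setminus R_i$ gives $w_\mu(R\setminus R_i)\le\eta\, w_\mu(R)$, whence
\[
w_\mu(R_i)\ge(1-\eta)\,w_\mu(R),\qquad i=1,\dots,M.
\]
This replaces the role of $C_D$ in the classical proof with the constant $1-\eta$, and since each $R_i\in\mathcal R$ is admissible, we may iterate.

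Next I would run the stopping-time construction with $w\mu$-averages in place of $\mu$-averages. Starting from $R$ (which by hypothesis has $w\mu$-average of $|f|w$ below $\lambda$), apply \eqref{algdecomposition} to partition it into admissible sub-trapezoids $R_1,\dots,R_M$; for each $R_i$, if the $w\mu$-average of $|f|w$ on $R_i$ is $\ge\lambda$, add $R_i$ to $\mathcal F$ and stop that branch, otherwise recurse on $R_i$. Because $\mu$ is locally doubling, $T$ has uniformly bounded degree and every admissible trapezoid is finite, so every branch of the recursion terminates at a singleton after finitely many steps. Property (i) is built into the selection rule. For (ii), if $E\in\mathcal F$ has parent $R'$ in the recursion, then by construction the $w\mu$-average of $|f|w$ on $R'$ is $<\lambda$, and by the ratio bound from Step~1 we have $w_\mu(E)\ge(1-\eta)w_\mu(R')$, so
\[
\frac{1}{w_\mu(E)}\sum_{y\in E}|f(y)|w(y)\mu(y)\le\frac{w_\mu(R')}{w_\mu(E)}\,\lambda\le\frac{\lambda}{1-\eta},
\]
which allows us to take $D_{CZ}'=1/(1-\eta)$. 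For (iii), any $x\in R\setminus\bigcup_{E\in\mathcal F}E$ survives the entire recursion, hence belongs to an infinite nested chain of sub-trapezoids whose $w\mu$-averages of $|f|w$ are all below $\lambda$; this chain eventually reduces to the singleton $\{x\}$, on which the $w\mu$-average of $|f|w$ equals $|f(x)|$, giving $|f(x)|<\lambda$.

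The main (essentially only) obstacle is Step~1, namely converting the $\mu$-lower bound inherent to \eqref{algdecomposition} into the corresponding $w\mu$-lower bound through Assumption~$1$. Once that is in place, the argument is a routine rerun of the classical Calderón--Zygmund stopping-time construction, and I expect no further subtlety.
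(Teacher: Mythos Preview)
Your proposal is correct and follows essentially the same approach as the paper: both convert the $\mu$-lower bound in \eqref{algdecomposition} into a $w\mu$-lower bound via Assumption~1 (your constant $1-\eta$ is exactly the paper's $\delta$ in \eqref{hp:CZbisequiv}, obtained by the same complementation $S\mapsto R\setminus S$), and then rerun the stopping-time argument of \cite[Lemma~3.5]{LSTV} verbatim with $w\mu$ in place of $\mu$. Two harmless slips to fix: throughout you mean the $w\mu$-average of $|f|$ (not of $|f|w$), and in your argument for (iii) the nested chain of sub-trapezoids is finite, not infinite.
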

\begin{proof} 
    By replacing $S$ with $R\setminus S$ in \eqref{hp:CZbis}, it is clear that Assumption 1 is equivalent to the following:  there exists $\delta \in (0,1)$ such that 
    \begin{align}\label{hp:CZbisequiv}
        \mu(S) \ge C_{D}\mu(R) \Longrightarrow w_\mu(S) \ge \delta w_\mu(R).
    \end{align} 
    Let $R_i \in \mathcal{R}$ be as in \eqref{algdecomposition}. By  \eqref{algdecomposition} and \eqref{hp:CZbisequiv}, it follows that 
    \begin{align*}
        \frac{w_\mu(R_i)}{w_\mu(R)} \ge \delta,
    \end{align*} where $\delta$ is a constant independent of $R \in \cR$. Then, one can follow verbatim the proof of \cite[Lemma 3.5]{LSTV} to conclude the proof. 
\end{proof} The next result is a consequence of the above proposition.
\begin{corollary}\label{revholdw-1} If $w$ is a weight which satisfies Assumption 1, then there exist $\varepsilon,C>0$ such that the following reverse H\"older inequality holds 
    \begin{align*}
        \bigg(\frac{1}{w_\mu(R)}\sum_{y \in R} w^{-1-\varepsilon}(y) w(y)\mu(y) \bigg)^{1/(1+\varepsilon)} \le \frac{C}{w_\mu(R)}\sum_{y \in R} \mu(y), \qquad \forall R \in \cR.
    \end{align*}
\end{corollary}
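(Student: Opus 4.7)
The key observation is that the claimed inequality is exactly the reverse H\"older inequality for the weight $w^{-1}$ with respect to the measure $w\mu$, restricted to admissible trapezoids: the left-hand side is the $(1+\varepsilon)$-average of $w^{-1}$ against $w\mu$ on $R$, while the right-hand side equals $\mu(R)/w_\mu(R)$, which is the corresponding $1$-average. My plan is therefore to mirror the proof of Theorem~\ref{revholder} systematically, swapping the roles of $\mu$ and $w\mu$ and of $w$ and $w^{-1}$. Proposition~\ref{newCZdec} provides the Calder\'on--Zygmund decomposition adapted to $w\mu$ in place of Lemma~\ref{lem:CZdec}, while the reverse absorption statement \eqref{assumrevw-1} derived from Assumption~1 replaces Lemma~\ref{lem:pre-reverse}.

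Concretely, I fix $R \in \cR$, set $\gamma_0 := \mu(R)/w_\mu(R)$, choose a small $\gamma \in (0,1)$ to be specified later, and define $\gamma_k := (D_{CZ}'\gamma^{-1})^k \gamma_0$. Applying Proposition~\ref{newCZdec} with $f = w^{-1}$ and $\lambda = \gamma_k$ yields disjoint families $\cF_k$ of admissible trapezoids with unions $U_k$, such that for every $E \in \cF_k$ one has $\gamma_k < \mu(E)/w_\mu(E) \le D_{CZ}'\gamma_k$, the pointwise bound $w^{-1} \le \gamma_k$ holds on $R \setminus U_k$, and each member of $\cF_{k+1}$ is contained in some member of $\cF_k$ (the nesting is inherited from the stopping-time construction, as in the proof of Theorem~\ref{revholder}). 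The same telescoping computation carried out in Theorem~\ref{revholder}, now with $w_\mu$ in place of $\mu$, shows that $w_\mu(E \cap U_{k+1}) < \gamma\, w_\mu(E)$ for every $E \in \cF_k$ and $k < k_f$, where $k_f$ is the first index with $U_{k_f} = \emptyset$.

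The proof now diverges from Theorem~\ref{revholder}: I choose $\gamma < 1-\eta$, where $\eta$ is the constant in Assumption~1, so that the derivation at the end of that assumption yields $\mu(E \cap U_{k+1}) < C_D\, \mu(E)$, and hence $\mu(U_{k+1}) < C_D\, \mu(U_k)$ after summing over $E \in \cF_k$. Decomposing $R = (R \setminus U_0) \cup \bigcup_{k=0}^{k_f-1}(U_k \setminus U_{k+1})$ and using the pointwise bounds $w^{-1} \le \gamma_{k+1}$ on $U_k \setminus U_{k+1}$ and $w^{-1} \le \gamma_0$ on $R \setminus U_0$ gives
\[
\sum_{y \in R} w(y)^{-\varepsilon}\, \mu(y) \le \gamma_0^\varepsilon\, \mu(R)\, \Bigl(1 + (D_{CZ}'\gamma^{-1})^\varepsilon \sum_{k=0}^\infty \bigl((D_{CZ}'\gamma^{-1})^\varepsilon C_D\bigr)^k\Bigr).
\]
Picking $\varepsilon > 0$ so small that $(D_{CZ}'\gamma^{-1})^\varepsilon C_D < 1$ makes the geometric series converge, and dividing both sides by $w_\mu(R)$ and taking the $(1+\varepsilon)$-th root recovers the claimed inequality, since $\gamma_0 = \mu(R)/w_\mu(R) = w_\mu(R)^{-1}\sum_{y \in R}\mu(y)$.

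The main obstacle is orienting the absorption implication in the correct direction. In Theorem~\ref{revholder} one bounds the $w\mu$-measure of a subset in terms of its $\mu$-measure through Lemma~\ref{lem:pre-reverse}, whereas here one must bound the $\mu$-measure of a subset in terms of its $w\mu$-measure, which is not a consequence of any $A_p(\mu)$ condition alone. Assumption~1 was introduced precisely to make this reverse absorption available, and its equivalent formulation \eqref{assumrevw-1} plugs into the iteration exactly where Lemma~\ref{lem:pre-reverse} was used in the original argument; everything else is bookkeeping.
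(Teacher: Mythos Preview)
Your proof is correct and follows essentially the same route as the paper: the paper's argument says to repeat the proof of Theorem~\ref{revholder} verbatim with $\mu$ replaced by $w\mu$, invoking Proposition~\ref{newCZdec} in place of Lemma~\ref{lem:CZdec} and using the reverse absorption \eqref{assumrevw-1} (as in Proposition~\ref{rem:revhold}) in place of Lemma~\ref{lem:pre-reverse}, and you have simply spelled out these substitutions in full detail. The choice $\gamma<1-\eta$ so that \eqref{assumrevw-1} yields $\mu(E\cap U_{k+1})<C_D\,\mu(E)$, and then picking $\varepsilon$ small enough that $(D_{CZ}'\gamma^{-1})^\varepsilon C_D<1$, is exactly the intended mechanism.
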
 
\begin{proof}
    The proof follows verbatim the proof of Theorem \ref{revholder} replacing $\mu$ with $w \mu$ and applying Proposition \ref{newCZdec} instead of Lemma \ref{lem:CZdec} and using \eqref{assumrevw-1} as explained in Proposition \ref{rem:revhold}. 
\end{proof} We now introduce the class of $A_\infty(\mu)$ weights.
\begin{definition} We define $A_\infty(\mu)=\bigcup_{p>1}A_{p}(\mu)$ and we set 
\begin{align*}
    [w]_{A_\infty(\mu)}=\sup_{R \in \mathcal{R}} \frac{1}{\mu(R)} \sum_{y \in \mathcal{R}} w(y)\mu(y) \exp{\bigg[\frac{1}{\mu(R)}\sum_{y \in R}\log (w(y)^{-1})\mu(y)\bigg]}.
\end{align*}
\end{definition} 
We  establish below the equivalence between $A_\infty(\mu)$ and the weights satisfying $[w]_{A_\infty(\mu)}<\infty$. Our main result is based on Corollary \ref{revholdw-1}. We refer to \cite{GCRdF, Mu2, CF} for related results in the Euclidean setting and to \cite{OP} for an example in a nondoubling setting. 
\begin{theorem}\label{thAinf} Suppose that $w$ is a weight on $T$. The following are equivalent:
\begin{itemize}
    \item[i)] $w \in A_\infty(\mu)$;
    \item [ii)] $[w]_{A_\infty(\mu)}<\infty;$
    \item [iii)] there is $\gamma_0 \in (0,1)$ such that for all $\gamma \in (0,\gamma_0)$ there exists $\delta=\delta(\gamma) \in (0,1)$ such that for all $R \in \mathcal{R}$
    \begin{align*}
        \mu\Big(\Big\{x \in R : w(x) \le \frac{\gamma}{\mu(R)}\sum_{y \in R} w(y) \mu(y)\Big\}\Big)\le \delta \mu(R).
    \end{align*} Furthermore,  for every $\varepsilon>0$ we can choose $\gamma \in (0,\gamma_0)$ such that $\delta(\gamma) <\varepsilon$;
    \item [iv)] for every $\xi \in (0,1)$ there is $\eta=\eta(\xi) \in (0,1)$ such that for all $R \in \mathcal{R}$ and $S \subset R$ $$\mu(S) \le \xi \mu(R) \Longrightarrow w_\mu(S) \le \eta w_{\mu}(R).$$ 
\end{itemize}
\end{theorem}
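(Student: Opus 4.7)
The plan is to prove the cycle $(i) \Rightarrow (ii) \Rightarrow (iii) \Rightarrow (iv) \Rightarrow (i)$, with Corollary \ref{revholdw-1} providing the key input for the closing step.

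For $(i) \Rightarrow (ii)$, I would apply Jensen's inequality with the concave logarithm to the probability measure $\mu/\mu(R)$ on $R$, obtaining
\begin{align*}
\exp\Big[\frac{1}{\mu(R)}\sum_{y \in R}\log w(y)^{-1}\mu(y)\Big] \le \Big(\frac{1}{\mu(R)}\sum_{y \in R} w(y)^{-1/(p-1)}\mu(y)\Big)^{p-1},
\end{align*}
and multiplying by $w_\mu(R)/\mu(R)$ gives $[w]_{A_\infty(\mu)}\le[w]_{A_p(\mu)}$.

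For $(ii)\Rightarrow (iii)$, set $M_R:=w_\mu(R)/\mu(R)$ and $S=\{x\in R: w(x)\le\gamma M_R\}$; rewriting (ii) as $\frac{1}{\mu(R)}\sum_R \log(M_R/w)\mu\le\log[w]_{A_\infty(\mu)}$ and splitting the sum according to the sign of $\log(M_R/w)$, I would use $\log(M_R/w)\ge\log(1/\gamma)$ on $S$ together with the elementary bound $\log(w/M_R)\le w/M_R$ for $w\ge M_R$, so that the negative contribution is bounded by $\frac{1}{\mu(R)}\sum_R(w/M_R)\mu=1$. This yields $\mu(S)/\mu(R) \le (\log[w]_{A_\infty(\mu)}+1)/\log(1/\gamma)$, which is $<1$ for small $\gamma$ and tends to $0$ as $\gamma\to 0$.

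For $(iii)\Rightarrow(iv)$, given $\xi\in(0,1)$ I would choose $\gamma$ small enough so that $\delta(\gamma)<(1-\xi)/2$. For any $S\subset R$ with $\mu(S)\le\xi\mu(R)$, the set $(R\setminus S)\cap\{w>\gamma M_R\}$ has $\mu$-measure at least $(1-\xi)\mu(R)/2$, and bounding $w\ge\gamma M_R$ there gives $w_\mu(R\setminus S)\ge (\gamma(1-\xi)/2)\,w_\mu(R)$, so $w_\mu(S)\le\eta w_\mu(R)$ with $\eta=1-\gamma(1-\xi)/2<1$.

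Finally, $(iv)\Rightarrow(i)$ is the main step: applying (iv) with the specific value $\xi=1-C_D$ from \eqref{algdecomposition} is exactly Assumption $1$, so Corollary \ref{revholdw-1} furnishes $C,\varepsilon>0$ such that
\begin{align*}
\Big(\frac{1}{w_\mu(R)}\sum_{y\in R} w(y)^{-\varepsilon}\mu(y)\Big)^{1/(1+\varepsilon)}\le \frac{C\,\mu(R)}{w_\mu(R)}\qquad\forall R\in\mathcal R.
\end{align*}
Raising to the $(1+\varepsilon)$-th power, dividing by $\mu(R)$, taking the $1/\varepsilon$-th power, and finally multiplying by $w_\mu(R)/\mu(R)$ transforms this into $[w]_{A_p(\mu)}\le C^{(1+\varepsilon)/\varepsilon}$ with $p=1+1/\varepsilon$, yielding (i). I expect the main obstacle to be $(ii)\Rightarrow(iii)$, where care is needed to control the negative part of the logarithmic integral; the remaining implications are then routine once one has the elementary bound $\log t \le t$ and the machinery of Corollary \ref{revholdw-1} at one's disposal.
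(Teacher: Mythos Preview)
Your proposal is correct and follows the same cycle $(i)\Rightarrow(ii)\Rightarrow(iii)\Rightarrow(iv)\Rightarrow(i)$ as the paper, with Corollary~\ref{revholdw-1} supplying the closing implication in both cases. The implementations of the two middle steps differ in flavour but not in substance: for $(ii)\Rightarrow(iii)$ the paper normalises so that $\sum_R\log w\,\mu=0$ and then combines Chebyshev's inequality for $\log(1+w^{-1})$ with Jensen's inequality, whereas your argument via $\log(w/M_R)\le w/M_R$ and the identity $\frac{1}{\mu(R)}\sum_R(w/M_R)\mu=1$ is more direct and yields the explicit bound $\delta(\gamma)=(1+\log[w]_{A_\infty(\mu)})/\log(1/\gamma)$; for $(iii)\Rightarrow(iv)$ the paper argues by contrapositive (assuming $w_\mu(S)>\eta w_\mu(R)$ and showing $\mu(S)>\xi\mu(R)$ via a splitting of $R\setminus S$), while you give the forward argument by exhibiting a large piece of $R\setminus S$ where $w>\gamma M_R$. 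Both routes are standard and equally valid; yours is marginally shorter.
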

\begin{proof} 
 Assume that $w \in A_\infty(\mu)$. Then, since the classes $A_p(\mu)$ are increasing with $p$,  $w \in A_p(\mu)$ for every $p \ge p_0$ for some $p_0 \in (1,\infty).$ Then, 
 \begin{align*}
     \frac{1}{\mu(R)}\sum_{y \in R} w(y) \mu(y) \bigg(\frac{1}{\mu(R)}\sum_{y \in R}w(y)^{-1/(p-1)}\mu(y)\bigg)^{p-1} \le C <\infty,
 \end{align*} for every $p \ge p_0$ and we conclude passing to the limit as $p \to \infty$.
 \\ We now prove that $ii)$ implies $iii)$. Indeed,   assume that $[w]_{A_\infty(\mu)}<\infty$ and let $R \in \mathcal{R}$. We can suppose without loss of generality that $\sum_{y \in R}\log w(y) \mu(y)=0$ (otherwise we can divide $w$ by a suitable constant) so that \begin{align}\label{eq:winf}
     \frac{1}{\mu(R)}\sum_{y \in R}w(y) \mu(y) \le [w]_{A_\infty(\mu)}.
 \end{align} It follows that, given $\gamma>0$ small enough to be chosen later
   \begin{align*}
       \mu\Big(\Big\{x \in R : w(x) \le \frac{\gamma}{\mu(R)} &\sum_{y \in R} w(y) \mu(y)\Big\}\Big) \\ &\le \mu(\{x \in R : w(x) \le \gamma[w]_{A_\infty(\mu)}\}) \\ &=\mu(\{x \in R : \log(1+w(x)^{-1}) \ge \log(1+(\gamma[w]_{A_\infty(\mu)})^{-1})\})\\&\le \frac{1}{\log(1+(\gamma[w]_{A_\infty(\mu)})^{-1})}\sum_{y \in R} \log(1+w(y)) \mu(y)  \\ 
        &=:I, 
       \end{align*} where in the second inequality we have used Chebychev's inequality and the fact that $\sum_{y \in R} \log w(y) \mu(y)=0.$ Next, by multiplying and dividing  $I$ by $\mu(R)$, an application of Jensen's inequality yields
       \begin{align*}
       I&\le \frac{\mu(R)}{\log(1+(\gamma[w]_{A_\infty(\mu)})^{-1})}\log\bigg(\frac{1}{\mu(R)}\sum_{y \in R} (1+w(y)) \mu(y) \bigg) \\&\le \frac{\mu(R)\log(1+[w]_{A_\infty(\mu)})}{\log(1+(\gamma[w]_{A_\infty(\mu)})^{-1})}\\ 
       &=\delta(\gamma) \mu(R),
   \end{align*} where in the last inequality we used \eqref{eq:winf} and we chose $\gamma_0=[w]_{A_\infty(\mu)}^{-2}$, $\gamma \in (0,\gamma_0)$, and $$\delta(\gamma):=\frac{\log(1+[w]_{A_\infty(\mu)})}{\log(1+(\gamma[w]_{A_\infty(\mu)})^{-1})}<1.$$ Observe that when $\gamma \to 0^+$ we have that $\delta(\gamma) \to 0$. \\ 
   Next, assume that $iii)$ holds. Pick $R \in \mathcal{R}$, fix $\xi \in (0,1)$ and suppose that $w_\mu(S)>\eta w_\mu(R)$ for some $\eta \in (0,1)$ to be chosen. Our goal is to show that necessarily $\mu(S)>\xi \mu(R)$. Indeed, define $E=R \setminus S$ and observe that $w_\mu(E)<(1-\eta)w_\mu(R)$. For a given $\gamma \in (0,\gamma_0)$, with $\gamma_0$ as in $iii)$, we set 
   \begin{align*}
       &E_1=\{x \in E \ : \ w(x) >\frac{\gamma}{\mu(R)} \sum_{y \in R} w(y) \mu(y) \} \\ 
       &E_2=\{x \in E \ : \ w(x) \le \frac{\gamma}{\mu(R)} \sum_{y \in R} w(y) \mu(y) \}.
   \end{align*} By $iii)$, $\mu(E_2) \le \delta w_\mu(R).$ Moreover, 
   \begin{align*}
       \mu(E_1) \le \frac{\mu(R)w_\mu(E)}{\gamma w_\mu(R)} < \frac{\mu(R)(1-\eta)}{\gamma},
   \end{align*} by Chebyshev's inequality. Hence we conclude that
   \begin{align*}
       \mu(E) < \bigg(\delta +\frac{1-\eta}{\gamma}\bigg)\mu(R).
   \end{align*} We now choose $\gamma \in (0,\gamma_0)$ small enough such that $1-\delta(\gamma)>\xi$ (this is possible because $\delta(\gamma) \to 0$  as $\gamma \to 0^+)$. Observe that $1-\delta>\xi$ if and only if $1-\xi>\delta$ thus  we choose $\eta \in (0,1)$ such that $\delta+\frac{1-\eta}{\gamma}=1-\xi$. Such a $\eta$ exists because $\delta<1-\xi$ and $\delta+1/\gamma>1>1-\xi$. 
   Summarizing, we have proved that $\mu(E) < (1-\xi) \mu(R),$ namely, that $\mu(S)>\xi \mu(R)$ from which follows $iv)$. \\ 
 Next, $iv)$ implies Assumption 1, thus, by Corollary \ref{revholdw-1} we get 
   \begin{align*}
      \bigg(\frac{1}{w_\mu(R)} \sum_{y \in R} w^{-(1+\varepsilon)} w(y)\mu(y)\bigg)^{1/(1+\varepsilon)} \le C \frac{\mu(R)}{w_\mu(R)}, \qquad\forall R \in \cR,
   \end{align*}
    which is equivalent to 
    \begin{align*}
       \frac{1}{\mu(R)} w_{\mu}(R)^{\varepsilon/(1+\varepsilon)} \bigg( \sum_{y \in R} w^{-\varepsilon}(y) \mu(y)\bigg)^{1/(1+\varepsilon)} \le C, \qquad \forall R \in \cR,
    \end{align*} that is the $A_p(\mu)$ condition for $w$ when $\varepsilon=1/(p-1)$. This implies $i)$ and concludes the proof.
\end{proof} 
In \cite[Section 4]{LSTV}, the space $BMO(\mu)$ is defined by  $$BMO(\mu):=\Big\{f : T \to \mathbb R \ : \ \sup_{R \in \mathcal{R}} \frac{1}{\mu(R)}\sum_{y \in R} |f(y)-f_R|\mu(y)<\infty\Big\}.$$ This space exhibits several good properties. For instance, it is isomorphic to the dual of a suitable Hardy space and it also interpolates with $L^p(\mu)$, $p\in(1,\infty)$ (see \cite[Section 4]{LSTV}). \\ 
In the Euclidean setting, it is known that $BMO$ functions coincide with (multiples) of logarithms of $A_p$ weights (see e.g. \cite[Corollary 2.19]{GCRdF}). In the next proposition, we prove that the same phenomenon occurs in our setting.
\begin{proposition}
Let $w \in A_p(\mu)$ for some $p\in(1,\infty)$ and define $g=\log w$. Then $g \in BMO(\mu)$. Conversely, if $f \in BMO(\mu)$ then  $f=\lambda \log w$ for some $w \in A_p(\mu)$ and $\lambda$ small enough. In other words, $BMO(\mu)=\{\lambda \log w, w \in A_\infty(\mu), \lambda \in \mathbb R\}.$
\end{proposition}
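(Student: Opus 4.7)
I'd split the equivalence into the two implications. For the forward direction, let $g=\log w$ and $\sigma=w^{-1/(p-1)}$; since $e^x$ is convex, Jensen's inequality yields the two lower bounds $\frac{1}{\mu(R)}\sum_{y\in R}w(y)\mu(y)\geq e^{g_R}$ and $\frac{1}{\mu(R)}\sum_{y\in R}\sigma(y)\mu(y)\geq e^{-g_R/(p-1)}$. Combining each with the $A_p(\mu)$ condition applied to the complementary factor, one extracts the two exponential averages
\[
\frac{1}{\mu(R)}\sum_{y\in R}e^{g(y)-g_R}\mu(y)\leq[w]_{A_p(\mu)},\qquad \frac{1}{\mu(R)}\sum_{y\in R}e^{(g_R-g(y))/(p-1)}\mu(y)\leq[w]_{A_p(\mu)}^{1/(p-1)}.
\]
Restricting the first average to $\{g>g_R\}$ and the second to $\{g<g_R\}$, and using $e^t\geq t$ for $t\geq 0$, I would obtain
\[
\frac{1}{\mu(R)}\sum_{y\in R}|g(y)-g_R|\mu(y)\leq[w]_{A_p(\mu)}+(p-1)[w]_{A_p(\mu)}^{1/(p-1)},
\]
uniformly in $R\in\cR$, so $\log w\in BMO(\mu)$ with an explicit seminorm bound.

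For the converse, the central step is a John--Nirenberg inequality in $BMO(\mu)$: there exist absolute constants $c_1,c_2>0$ such that for every $f\in BMO(\mu)$, every $R\in\cR$ and every $t>0$,
\[
\mu\bigl(\{y\in R:|f(y)-f_R|>t\}\bigr)\leq c_1\,e^{-c_2 t/\|f\|_{BMO(\mu)}}\,\mu(R).
\]
I would establish this by the standard iterated Calder\'on--Zygmund argument: normalizing $\|f\|_{BMO(\mu)}=1$, apply Lemma~\ref{lem:CZdec} to $|f-f_R|\chi_R$ at a large enough level $\lambda_0$, use that $|f-f_E|\leq|f-f_R|+|f_R-f_E|$ with $|f_R-f_E|\leq D_{CZ}\lambda_0$ on each stopping trapezoid $E\in\mathcal{F}$, and iterate so that the linear gain in $\lambda_0^{-1}$ turns into exponential decay in $t$. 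Integrating the resulting tail estimate gives $\frac{1}{\mu(R)}\sum_{y\in R}e^{\alpha|f(y)-f_R|}\mu(y)\leq C$ for every $\alpha<c_2/\|f\|_{BMO(\mu)}$.

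With John--Nirenberg in hand, the converse is a short computation. Given $f\in BMO(\mu)$, choose $\lambda>0$ small enough that both $\lambda$ and $\lambda/(p-1)$ lie strictly below $c_2/\|f\|_{BMO(\mu)}$, and set $w=e^{\lambda f}$. The two exponential integrability bounds then read
\[
\frac{1}{\mu(R)}\sum_{y\in R}w(y)\mu(y)\leq C e^{\lambda f_R},\qquad \frac{1}{\mu(R)}\sum_{y\in R}w(y)^{-1/(p-1)}\mu(y)\leq C e^{-\lambda f_R/(p-1)},
\]
so that multiplying the first by the $(p-1)$-th power of the second yields $[w]_{A_p(\mu)}\leq C^{p}$ uniformly in $R\in\cR$. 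Hence $w\in A_p(\mu)\subset A_\infty(\mu)$ and $f=\lambda^{-1}\log w$ is of the claimed form, with $\lambda$ depending only on $p$ and $\|f\|_{BMO(\mu)}$.

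The main obstacle in this program is the John--Nirenberg estimate, which is the one place where the geometry of $(T,d,\mu)$ really enters. Lemma~\ref{lem:CZdec} supplies precisely the stopping-time machinery required for the classical Euclidean proof to go through, but one must verify that iterating the decomposition inside a fixed admissible trapezoid preserves both the BMO mean-oscillation control and the admissibility structure of the selected subtrapezoids at each stage, or alternatively invoke a John--Nirenberg theorem from \cite{LSTV}. Everything downstream---including the passage from the exponential tail bound to an $A_p(\mu)$ estimate---is routine once this ingredient is available.
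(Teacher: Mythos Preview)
Your proposal is correct and follows essentially the paper's strategy. For the forward implication the paper first reduces to $p=2$ (using $A_p(\mu)\subset A_2(\mu)$ for $p<2$ and passing to $w^{-1/(p-1)}\in A_{p'}(\mu)\subset A_2(\mu)$ for $p>2$) and then bounds $\|\log w\|_{BMO(\mu)}$ by $2\log[w]_{A_2(\mu)}$ via Jensen and Cauchy--Schwarz, whereas you work directly with general $p$ and obtain the bound $[w]_{A_p(\mu)}+(p-1)[w]_{A_p(\mu)}^{1/(p-1)}$; both routes rest on the same Jensen-for-exponentials idea. For the converse the paper proceeds exactly as you do but simply \emph{cites} the John--Nirenberg inequality from \cite[Proposition~4.2(i)]{LSTV} rather than reproving it, so your sketch of the iterated stopping-time argument is more than what is required here.
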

\begin{proof} It suffices to prove the first implication when $w \in A_2(\mu)$. Indeed, if $p <2$ we have that $w \in A_2(\mu)$ and if $p>2$ then  $w^{-1/(p-1)} \in A_{p'}(\mu) \subset A_2(\mu)$ and replacing $w$ by $w^{-1/(p-1)}$ we obtain that $-(p-1)\log w \in BMO(\mu)$. \\ 
For every $R \in \mathcal{R}$ define $$R^+=R \cap \bigg\{x \in R \ : \ g(x)-g_R \ge 0 \bigg\}, \ \ R^-=R \setminus R^+, $$  where $g=\log w$ and for every function $f$ on $T$ we define  $f_R=\frac{1}{\mu(R)}\sum_{y \in R} f(y) \mu(y)$. By Jensen's inequality, we have that 
\begin{align*}
    \exp\bigg[{\frac{1}{\mu(R)}\sum_{y \in R} \chi_{R^+}(y) (g(y)-g_R) \mu(y) }\bigg] \le \frac{1}{\mu(R)} \bigg(\sum_{y \in R^+}(w(y)e^{-g_R})\mu(y)+\sum_{y \in R^-}  \mu(y)\bigg).
\end{align*}  Moreover, since $x \mapsto e^{-x}$ is convex, again by Jensen's inequality $e^{-g_R} \le (w^{-1})_R$ and  $$1=\bigg(\frac{1}{\mu(R)}\sum_{y \in R}w^{-1/2}(y)w^{1/2}(y) \mu(y)\bigg)^2 \le w_R (w^{-1})_R$$  by Cauchy-Schwarz inequality. It follows that 
\begin{align*}
    \frac{1}{\mu(R)}\sum_{y \in R} \chi_{R^+}(y) (g(y)-g_R) \mu(y) \le \log [w_R (w^{-1})_R].
\end{align*} A similar argument shows that 
\begin{align*}
      \exp\bigg[{\frac{1}{\mu(R)}\sum_{y \in R} \chi_{R^-}(y) (g_R-g(y)) \mu(y) }\bigg] &\le \frac{1}{\mu(R)} \bigg(\sum_{y \in R^-}(e^{g_R}w(y)^{-1})\mu(y)+\sum_{y \in R^+}  \mu(y)\bigg) \\ 
      &\le w_R (w^{-1})_R.
\end{align*}
Therefore, 
\begin{align*}
    \frac{1}{\mu(R)} \sum_{y \in R} |\log w(y)-(\log w)_R| \mu(y) \le \log [w_R (w^{-1})_R] \le \log [w]_{A_2(\mu)}.
\end{align*} This shows that $g \in BMO(\mu)$.  \\ Next, pick $f \in BMO(\mu)$ and fix $p \in (1,\infty)$. Define $\psi=e^{\lambda f}$ for some $\lambda=\lambda(p)>0$ to be chosen. Our goal is to show that $\psi \in A_p(\mu)$. Indeed, 
\begin{align*}
    &\frac{1}{\mu(R)}\sum_{y \in R} \psi(y) \mu(y) \bigg( \frac{1}{\mu(R)}\sum_{y \in R} \psi(y)^{-1/(p-1)} \mu(y)\bigg)^{p-1} \\ &=\frac{1}{\mu(R)}\sum_{y \in R} e^{(f(y)-f_R)\lambda} \mu(y) \bigg( \frac{1}{\mu(R)}\sum_{y \in R}e^{-\lambda(f(y)-f_R)/(p-1)} \mu(y)\bigg)^{p-1} \\ 
    &\le \bigg( \frac{1}{\mu(R)}\sum_{y \in R}e^{\eta|(f(y)-f_R|} \mu(y)\bigg)^{p},
\end{align*} where $\eta:=\lambda\max\{1,1/(p-1)\}$. The last expression is uniformly bounded by a constant when $\lambda$ is small enough by  John-Nirenberg's inequality, see \cite[Proposition 4.2. (i)]{LSTV}.
\end{proof} 
\section{$A_\infty(\mu)$ and quasisymmetric mappings}\label{QSmappings}

In this section, we assume that $T=\mathbb T_q$ is the homogeneous tree of order $q+1$, namely the tree such that every vertex has exactly $q+1$ neighbours. The associated canonical flow measure is $\mu(\cdot)=q^{\ell(\cdot)}$ where $\ell$ is the level function on $T$ defined in \eqref{def:lev}. Let $f$ be a bijection from $T$ onto itself and $J_f$ be the weight on $T$ defined by  $$J_f(x)=\frac{\mu(f(x))}{\mu(x)}, \qquad \forall x \in T.$$ 
Observe that, for every finite $E \subset T$, since $f$ is a bijection, we have 
\begin{align*}
     (J_f)_\mu(E):=\sum_{y \in E} J_f(y) \mu(y)=\sum_{y \in E} \mu(f(y)) =\mu(f(E)),
\end{align*}  so $J_f$ can be thought of as the discrete version of the {\it Jacobian} of $f$. 
We remark that, by Theorem \ref{thAinf}, $J_f \in A_\infty(\mu)$ if and only if for every $\xi \in (0,1)$ there exists $\eta \in (0,1)$ such that for every $R \in \mathcal{R}$ and $S \subset R$
\begin{align}\label{remark_J}
    \frac{\mu(S)}{\mu(R)} \le \xi \Longrightarrow  \frac{\mu(f(S))}{\mu(f(R))}=\frac{(J_f)_\mu(S)}{(J_f)_\mu(R)} \le \eta.
\end{align}

 In the next proposition, we shall prove that isometries with respect to the geodesic distance do not generally have Jacobian in $A_\infty(\mu)$. We refer to \cite[Chapter 1]{FTN} for more information about the action of the group of isometries on the homogeneous tree.
\begin{proposition}\label{iso-notAp} Let $f$ be an isometry on $T$ with respect to $d$ that exchanges $\omega_*$ with another boundary point $\omega_- \in \partial T$. Then, $J_f$ does not belong to $A_\infty(\mu)$.
\end{proposition}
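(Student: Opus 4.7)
The plan is to apply Theorem \ref{thAinf} (iv): I will exhibit sequences $R_n\in\mathcal{R}$ and $S_n\subset R_n$ with $\mu(S_n)/\mu(R_n)\to 0$ while $(J_f)_\mu(S_n)/(J_f)_\mu(R_n)\to 1$, contradicting the $A_\infty(\mu)$ condition. The first step is to identify the bi-infinite geodesic $\gamma=(v_k)_{k\in\mathbb Z}$ joining $\omega_-$ to $\omega_*$, indexed so that $\ell(v_k)=k$. Since $f$ is an isometry exchanging $\omega_-$ and $\omega_*$, it preserves $\gamma$ and acts on it as a reflection; after relabelling we may assume $f(v_k)=v_{-k}$ (the case in which $f$ fixes the midpoint of an edge of $\gamma$ is identical up to a shift). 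For every $y\in T$ write $\pi(y)=v_{k(y)}$ for its projection on $\gamma$ and $m(y)=d(y,\gamma)$. A direct computation using the definition of the level function gives $\ell(y)=k(y)-m(y)$, and since $f$ preserves the distance to $\gamma$ but reverses the $k$-coordinate, $\ell(f(y))=-k(y)-m(y)$. Hence
\[
J_f(y)=q^{\ell(f(y))-\ell(y)}=q^{-2k(y)}.
\]
In particular $J_f$ depends only on $k(y)$; it is tiny when $k(y)>0$ and huge when $k(y)<0$.

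Next, I take $R_n:=R_n^{2n}(v_n)$, which belongs to $\mathcal{R}$ since $h_2/h_1=2\in[2,\beta]$, and I set $S_n:=\{y\in R_n:y\le v_0\}$, i.e.\ the descendants of $v_0$ contained in $R_n$. Since $v_0\le v_n$ and $d(v_n,y)=n+d(v_0,y)$ for $y\le v_0$, the set $S_n$ coincides with $R_0^n(v_0)$, so the flow formula \eqref{eq:measR} yields $\mu(R_n)=n\,q^n$ and $\mu(S_n)=n$. Therefore
\[
\frac{\mu(S_n)}{\mu(R_n)}=\frac{1}{q^n}\longrightarrow 0.
\]

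For the weighted ratio, consider the complement $T_n:=R_n\setminus S_n=\{y\in R_n:k(y)\ge 1\}$, decomposed by projection index $k\in\{1,\dots,n\}$. Counting off-$\gamma$ vertices in $R_n$ whose projection is $v_k$: at distance $r$ from $v_n$ there are $(q-1)q^{r-n+k-1}$ such vertices, each of measure $q^{n-r}$, so summing over $r\in[n,2n-1]$ gives $\mu(\{y\in R_n:k(y)=k\})=n(q-1)q^{k-1}$. Since $J_f=q^{-2k}$ on this layer,
\[
(J_f)_\mu(T_n)=\sum_{k=1}^{n} n(q-1)q^{k-1}\cdot q^{-2k}=n(q-1)\sum_{k=1}^{n}q^{-k-1}=O(n).
\]
On the other hand, just the on-$\gamma$ vertices $v_k$, $k\in[-(n-1),0]$ of $R_n$, for which $J_f(v_k)\mu(v_k)=q^{-k}$, already contribute
\[
(J_f)_\mu(R_n)\ \ge\ \sum_{k=-(n-1)}^{0}q^{-k}=\frac{q^{n}-1}{q-1}\gtrsim q^n.
\]
Consequently $(J_f)_\mu(T_n)/(J_f)_\mu(R_n)=O(n\,q^{-n})\to 0$, and hence $(J_f)_\mu(S_n)/(J_f)_\mu(R_n)\to 1$. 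This contradicts Theorem \ref{thAinf} (iv) and proves $J_f\notin A_\infty(\mu)$.

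The main obstacle is picking the right set $S_n$. The most natural choice, $S_n=R_n\cap \gamma$, produces $(J_f)_\mu(S_n)/(J_f)_\mu(R_n)\to q/(q+1)$, which is strictly less than $1$ and therefore fails to contradict condition (iv) directly (it is consistent with some $\eta(\xi)<1$). One must enlarge $S_n$ to include all off-$\gamma$ vertices whose projection on $\gamma$ lies at $v_k$ with $k\le 0$, i.e.\ the entire descending subtree of $v_0$ inside $R_n$; this captures essentially the full $(J_f)_\mu$-mass arising from the reflected side of $\gamma$, while keeping $\mu(S_n)$ only linear in $n$ as opposed to the exponential $\mu(R_n)=n\,q^n$. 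The remainder of the argument is a careful but routine enumeration of vertices of $R_n$ by their projection index $k(y)$.
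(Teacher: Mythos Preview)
Your proof is correct and follows essentially the same strategy as the paper: the paper chooses $R_n=R_n^{2n}(o)$ with $o=x_0$ on the axis and $E_n=\{x\le x_{-n}\}\cap R_n$, which after a level shift by $n$ is exactly your pair $(R_n,S_n)=(R_n^{2n}(v_n),\{y\in R_n:y\le v_0\})$, and both arguments contradict Theorem~\ref{thAinf}(iv) by showing $\mu(S_n)/\mu(R_n)=q^{-n}\to 0$ while $(J_f)_\mu(S_n)/(J_f)_\mu(R_n)\to 1$. The only substantive difference is in the bookkeeping for the weighted ratio: the paper argues geometrically, observing that $f(E_n)\ni x_{2n-1}$ gives $\mu(f(E_n))\ge q^{2n-1}$, while $f(R_n\setminus E_n)$ sits inside a sector of $\mu$-measure at most $3nq^{n-1}$; you instead derive the pointwise formula $J_f(y)=q^{-2k(y)}$ and sum layer by layer over the projection index $k$, which yields the same $O(nq^{-n})$ bound for the complementary ratio.
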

\begin{proof}
    Without loss of generality assume that $o \in (\omega_-, \omega_*) $ and that $f(o)=o$. Let  $\{x_n\}_{n \in \mathbb Z}$ denote an enumeration of the infinite geodesic $(\omega_-,\omega_*)$ such that $\ell(x_n)=n.$ Then, we have that $f(x_n)=x_{-n}.$ Let $\{R_n\}_{n \in \mathbb N}\subset \mathcal{R}$ be defined by $R_n=R_{n}^{2n}(o)$ and set  
    $E_n:=\{x \le x_{-n}\}\cap R_n$. Then, 
    \begin{align*}
        \frac{\mu(E_n)}{\mu(R_n)}=q^{-n}.
    \end{align*} It suffices to disprove \eqref{remark_J}  showing that
    \begin{align}\label{limit}
         \frac{\mu(f(E_n))}{\mu(f(R_n))} \to 1 \quad \mathrm{as}\ n \to \infty.
    \end{align} Notice that  $f(E_n)$ contains $f(x_{-2n+1})=x_{2n-1}$, thus $$\mu(f(E_n)) \ge \mu(x_{2n-1})=q^{2n-1}.$$ Since $f$ is an isometry, $\max_{x \in R_n \setminus E_n} d(x_{-n+1},x) \le 3n-1$ and $f(x_{-n+1})=x_{n-1}$, one can see that $$f(R_n\setminus E_n) \subset \{y \ \in T \ : y \le x_{n-1}, d(x_{n-1},y) \le 3n-1\}.$$ Hence $\mu(f(R_n\setminus E_n)) \le 3n q^{n-1}$ and 
    \begin{align*}
         &\frac{\mu(f(R_n\setminus E_n))}{\mu(f(E_n))} \le \frac{3n}{q^{n}}\to 0, \qquad \mathrm{as} \ n \to \infty.
         \end{align*} It follows that
         \begin{align*}
       \frac{\mu(f(E_n))}{\mu(f(R_n))}= \frac{\mu(f(E_n))}{\mu(f(E_n))+\mu(f(R_n\setminus E_n))} \to 1, \qquad \mathrm{as}\ n \to \infty,
    \end{align*}
 and \eqref{limit} is proved.
\end{proof} Intuitively, the problem with these isometries is that they do not consistently preserve the order relation $\le$ between vertices, because the relationship between their images under the isometry is not guaranteed in general. \\ 
 We now introduce a different distance on $T$ which is also natural when we deal with admissible trapezoids. We define the {\it Gromov} distance $\rho : T \times T \to [0,\infty)$ by 
\begin{align*}
    \rho(x,y)=\begin{cases}e^{\ell(x \wedge y)} &x\ne y, \\ 0 &x=y,
    \end{cases}
\end{align*} where $x\wedge y$ denotes the {\it confluent} of $x,y \in T$, namely, the vertex of minimum level which is above both $x$ and $y$. It is easy to verify that $\rho$ is a metric. 

In the next proposition, we study some properties of isometries on $T$ with respect to the Gromov distance $\rho$.
\begin{proposition}\label{isorho}
    Let $f$  be an isometry on $T$ with respect to $\rho$. Then, $f$ is a level and order-preserving onto map, namely, \begin{itemize}
        \item[i)] $\ell(x)=\ell(f(x)),$ for every $x \in T;$
        \item[ii)] $x \le y \iff f(x) \le f(y),$ for every $x,y \in T.$
    \end{itemize}
\end{proposition}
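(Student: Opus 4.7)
The plan is to recover both the level function $\ell$ and the partial order $\leq$ from the Gromov metric $\rho$ alone; since $f$ is a $\rho$-isometry, both will then be automatically preserved. The starting geometric fact is that the confluent $x\wedge y$ is always an ancestor of $x$ in the order, so $\ell(x\wedge y)\geq \ell(x)$, with equality exactly when $x\wedge y=x$, i.e.\ when $y$ is a strict descendant of $x$. Translating this through $\rho(x,y)=e^{\ell(x\wedge y)}$ yields $\rho(x,y)\geq e^{\ell(x)}$ for every $y\neq x$, with equality iff $y<x$; this lower bound is attained because every vertex of $\mathbb T_q$ has $q$ successors.

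For (i), this yields the purely metric formula
\[
e^{\ell(x)}=\min_{y\in T,\,y\neq x}\rho(x,y).
\]
Since $f$ is a bijective $\rho$-isometry, the substitution $y\mapsto f(y)$ is a bijection from $T\setminus\{x\}$ onto $T\setminus\{f(x)\}$, so
\[
e^{\ell(f(x))}=\min_{z\neq f(x)}\rho(f(x),z)=\min_{y\neq x}\rho(f(x),f(y))=\min_{y\neq x}\rho(x,y)=e^{\ell(x)},
\]
and therefore $\ell(f(x))=\ell(x)$.

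For (ii), I would characterize the order metrically in the same spirit: for $x\neq y$, $x\leq y$ iff $x\wedge y=y$ iff $\rho(x,y)=e^{\ell(y)}$. Applying this criterion at $f(x)$ and $f(y)$, and invoking (i) together with $\rho(f(x),f(y))=\rho(x,y)$, one gets $x\leq y$ iff $f(x)\leq f(y)$; the diagonal case $x=y$ is handled by injectivity. Onto\-ness may be taken as part of the definition of ``isometry on $T$'' (the convention already used in Proposition~\ref{iso-notAp}); alternatively, once (i) and (ii) are in hand, it follows from a cardinality count: by level and order preservation $f$ bijects each $q^{k}$\nobreakdash-element level slice of the descendant cone $\{y:y\leq x_n\}$ onto the corresponding slice of $\{y:y\leq f(x_n)\}$, and these cones exhaust $T$ as $n\to\infty$. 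The only delicate point is the apparent circularity between bijectivity and the minimum characterization used in (i); the cleanest way around this is to adopt the bijectivity convention from the outset.
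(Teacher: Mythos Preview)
Your proof is correct and takes a cleaner route than the paper. You encode both the level and the order purely in terms of $\rho$: the identity $e^{\ell(x)}=\min_{y\neq x}\rho(x,y)$ (the minimum being attained at any successor of $x$) makes $\ell$ a metric invariant, and then $x\le y\Longleftrightarrow \rho(x,y)=e^{\ell(y)}$ finishes (ii) once (i) is known. The paper instead argues locally: for a vertex $x_0$ with successors $x_1,\dots,x_q$, all $\binom{q+1}{2}$ pairwise $\rho$-distances equal $e^{\ell(x_0)}$; transporting this configuration by $f$ and using a pigeonhole argument (there are only $q$ branches below any vertex of $\mathbb T_q$) forces one of the $f(x_j)$ to coincide with the common confluent $z$, and a short contradiction shows it must be $f(x_0)$. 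This yields $\ell(f(x_0))=\ell(x_0)$ and $f\circ p=p\circ f$, hence (ii).

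Your argument is shorter and more conceptual, and it works verbatim on any rooted-at-infinity tree in which every vertex has at least one successor. The paper's argument is more combinatorial and tied to the exact branching number $q$; its one small advantage is that the proof of (i) does not invoke surjectivity of $f$ (the pigeonhole step is local), whereas your minimum formula for $e^{\ell(f(x))}$ needs the substitution $z=f(y)$ to sweep all of $T\setminus\{f(x)\}$. As you note, adopting the bijectivity convention for ``isometry'' (already implicit in the paper) resolves this cleanly.
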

\begin{proof} Let $f$ be an isometry with respect to $\rho$. 
Notice that it suffices to show that $\ell(x)=\ell(f(x))$ for every $x$ in $T$ and \begin{align}\label{claim2}
    x=p(y) \Longrightarrow f(x)=p(f(y)),
\end{align} or in other words $f \circ p = p \circ f,$ where $p(x)$ denotes the predecessor of $x \in T$. 
Fix a point $x_0 \in T$ and let $\{x_j\}_{j=1}^q$ be an enumeration of $s(x_0)$. It easy to see that 
\begin{align*}
    e^{\ell(x_0)}=\rho(x_j,x_k), \qquad \forall j,k=0,1,...,q \ \mathrm{and} \  j \ne k.
\end{align*} Since $f$ is an isometry, it follows that 
\begin{align*}
e^{\ell(x_0)}=\rho(f(x_j),f(x_k))=e^{\ell(f(x_j)\wedge f(x_k))}, \qquad \forall j,k=0,1,...,q \ \mathrm{such \ that} \  j \ne k,
\end{align*} which in turn implies 
\begin{align}\label{eq:confluent}
  \ell(x_0)=\ell(f(x_j) \wedge f(x_k)) \qquad  \forall j,k=0,1,...q, k \ne j.
\end{align}
Clearly, for every $j=1,...,q$ we have that \begin{align*}
    &f(x_0) \le f(x_j)\wedge f(x_0), 
\end{align*} thus, choosing $k=0$ in \eqref{eq:confluent} we get that $z:=f(x_j) \wedge f(x_0)$ does not depend on $j=1,...,q$. Moreover, we observe that for every  $j,k=0,1,...,q$ \begin{align*}
    &f(x_j) \le z \\ &f(x_j) \le f(x_j)\wedge f(x_k),
\end{align*} from which we conclude that $z \le f(x_j)\wedge f(x_k)$ or $f(x_j)\wedge f(x_k)\le z$. Again by \eqref{eq:confluent} we get $z=f(x_j)\wedge f(x_k)$ for every $j,k=0,1,...,q$ such that $j\ne k$. 
Hence, we have found $q+1$ vertices, namely $\{f(x_j)\}_{j=0}^{q}$, that are below $z$ and whose mutual confluent is $z$. Since $T=\mathbb T_q$, the only possibility is that $f(x_{j_0})=z$ for some $j_0=0,...,q$. It is immediate to see that $j_0=0$. Indeed, if $j_0\ge 1$, then by picking any $v \le x_{j_0}$ with $v \ne x_{j_0}$, by \eqref{eq:confluent} we see that $$e^{\ell(f(x_{j_0}))}=e^{\ell(x_0)}>e^{\ell(x_0)-1}=\rho(x_{j_0},v)=\rho(f(x_{j_0}),f(v)) \ge e^{\ell(f(x_{j_0}))},$$ a contradiction. Thus, $j_0=0$ and consequently $z=f(x_0)$ and $\ell(x_0)=\ell(f(x_0))$. By the arbitrariness of $x_0$ and the fact that $f(x_j) \le f(x_0)$ for every $j=1,...,q$ we have \eqref{claim2}.

\end{proof}
It is clear that if $f$ is an isometry with respect to $\rho$  then $J_f=1$, because it is level-preserving. Thus $J_f$ is trivially in $A_p(\mu)$ for every $p \in [1,\infty]$. Next, we  show that if $f$ is a bilipschitz map with respect to $\rho$, i.e., a map from $T$ onto itself such that
\begin{align}\label{bi-lip}
  e^{-C}\rho(f(x),f(y))  \le \rho(x,y) \le e^C \rho(f(x),f(y)), \qquad \forall x,y \in T,
  \end{align}
  then $J_f$ belongs to $A_p(\mu)$ for every $p \in [1,\infty]$.
\begin{proposition}\label{bilipAp}
    Suppose that $f$ is bilipschitz on $T$ with respect to $\rho$. Then, 
\begin{align*}
    -C\le \ell(x)-\ell(f(x)) \le C \qquad \forall x \in T, 
\end{align*} where $C$ is as in \eqref{bi-lip}. In particular, it follows that $J_f, 1/J_f \in L^{\infty}(T)$ and thus $J_f \in A_p(\mu)$ for every $p \in [1,\infty]$.
\end{proposition}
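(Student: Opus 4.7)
The plan is to establish the level bound $|\ell(f(x))-\ell(x)|\le C$ for every $x\in T$ by a direct application of \eqref{bi-lip} at two convenient pairs of points. Once this is in hand, $J_f(x)=\mu(f(x))/\mu(x)=q^{\ell(f(x))-\ell(x)}$ is bounded above by $q^C$ and below by $q^{-C}$, so both $J_f$ and $1/J_f$ lie in $L^\infty(T)$. The $A_p(\mu)$ membership for $p\in[1,\infty)$ then follows at once from \eqref{defAp}: the two averages entering the definition are each dominated by the corresponding essential suprema, giving $[J_f]_{A_p(\mu)}\le q^{2C}$; the case $p=\infty$ is covered since $A_\infty(\mu)=\bigcup_{p>1}A_p(\mu)$.

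For the upper bound, fix $x\in T$ and pick a successor $y\in s(x)$ (possible because $T=\mathbb T_q$ is homogeneous with $q\ge 1$). Since $y<x$, the confluent is $x\wedge y=x$, so $\rho(x,y)=e^{\ell(x)}$. As $f$ is a bijection, $f(x)\neq f(y)$, and hence $\rho(f(x),f(y))=e^{\ell(f(x)\wedge f(y))}$. Applying \eqref{bi-lip} to the pair $(x,y)$ and taking logarithms yields $|\ell(f(x)\wedge f(y))-\ell(x)|\le C$. Since $f(x)\wedge f(y)\ge f(x)$ in the partial order on $T$, we have $\ell(f(x))\le \ell(f(x)\wedge f(y))\le \ell(x)+C$.

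For the lower bound, observe that $f^{-1}$ is a well-defined bijection and, by substituting $f^{-1}(u),f^{-1}(v)$ for $x,y$ in \eqref{bi-lip}, it satisfies the same bilipschitz inequality with the same constant $C$. Applying the previous paragraph to $f^{-1}$ at the point $f(x)$ gives $\ell(x)=\ell(f^{-1}(f(x)))\le \ell(f(x))+C$, i.e.\ $\ell(f(x))\ge \ell(x)-C$. Combining the two inequalities yields the claim. The only geometric ingredient is the elementary observation that confluents of the form $x\wedge y=x$ are realised by strict descendants of $x$, so no real obstacle arises; the argument reduces the bilipschitz hypothesis on $\rho$ to a direct level comparison via the exponential form $\rho=e^{\ell(\cdot\wedge\cdot)}$.
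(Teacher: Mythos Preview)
Your proof is correct and follows essentially the same approach as the paper: both exploit that if $y<x$ then $\rho(x,y)=e^{\ell(x)}$, combine this with the bilipschitz bound and the trivial inequality $\ell(f(x))\le \ell(f(x)\wedge f(y))$ to get one side, and then use a symmetric argument for the other. The only cosmetic difference is that for the second inequality the paper picks $y$ with $f(y)<f(x)$ (via surjectivity) and argues directly, whereas you invoke the bilipschitz property of $f^{-1}$; these are equivalent manoeuvres.
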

 \begin{proof}
     
Assume that $x,y$ are vertices in $T$ such that $y\le x$ and $y \ne x$. Then, 
\begin{align*}
   e^{-C}\le \frac{\rho(x,y)}{\rho(f(x),f(y))}=e^{\ell(x)-\ell(f(x)\wedge f(y))} \le e^{\ell(x)-\ell(f(x))}.
\end{align*} It follows that $\ell(x)-\ell(f(x)) \ge \log e^{-C}$ for every $x \in T$. Conversely, pick $y$ such that $f(y)\le f(x)$ and $f(y) \ne f(x)$ (this is possible since $f$ is surjective). Then, 
\begin{align*}
  e^{\ell(x)-\ell(f(x))} \le \frac{\rho(x,y)}{\rho(f(x),f(y))}=e^{\ell(x \wedge y)-\ell(f(x)) } \le e^C,
\end{align*} from which follows $\ell(x)-\ell(f(x)) \le  C$.
Therefore, 
\begin{align*}
   -C \le  \ell(x)-\ell(f(x))\le  C \qquad \forall x \in T,
\end{align*} as desired. 
 \end{proof} 
 It turns out that bilipschitz bijections with respect to $\rho$ are quasi-isometries with respect to $d$.
 
 \begin{proposition}
 Suppose that $f$ is bilipschitz on $T$ with respect to $\rho$. Then $f$ is a $(1,4C)$-quasi-isometry bijection with respect to $d$, namely 
 $$
 d(f(x),f(y))-4C \leq d(x,y)\leq d(f(x),f(y)) +4C,
 $$
 for all $x,y\in T$. In particular, every isometry with respect to $\rho$ is an isometry with respect to $d$.
 \end{proposition}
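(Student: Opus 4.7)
The plan is to reduce everything to a single identity: on a tree hanging from $\omega_*$, the geodesic distance admits the closed form
\begin{equation*}
d(x,y) = 2\ell(x\wedge y) - \ell(x) - \ell(y),
\end{equation*}
since the unique geodesic from $x$ to $y$ climbs from $x$ up to the confluent $x\wedge y$ and then descends to $y$. Once this is noted, the claim becomes an exercise in controlling the three quantities $\ell(x)$, $\ell(y)$, $\ell(x\wedge y)$ under $f$.

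First, I would record two $C$-Lipschitz estimates for $f$ at the level of $\ell$. The bound $|\ell(x) - \ell(f(x))|\le C$ for every $x\in T$ is exactly the previous proposition (the one establishing $J_f,1/J_f\in L^\infty$). To get an analogous bound on the confluent, I would observe that the bilipschitz hypothesis $e^{-C}\rho(f(x),f(y))\le \rho(x,y)\le e^{C}\rho(f(x),f(y))$ translates, by the very definition $\rho(u,v)=e^{\ell(u\wedge v)}$ for $u\ne v$, into
\begin{equation*}
\ell(f(x)\wedge f(y)) - C \;\le\; \ell(x\wedge y) \;\le\; \ell(f(x)\wedge f(y)) + C,
\end{equation*}
for all $x\ne y$.

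Next, I would substitute into the distance formula applied to the pairs $(x,y)$ and $(f(x),f(y))$ and take the difference:
\begin{equation*}
d(x,y) - d(f(x),f(y)) = 2\bigl(\ell(x\wedge y)-\ell(f(x)\wedge f(y))\bigr) - \bigl(\ell(x)-\ell(f(x))\bigr) - \bigl(\ell(y)-\ell(f(y))\bigr).
\end{equation*}
The triangle inequality together with the two $C$-estimates above gives $|d(x,y) - d(f(x),f(y))|\le 2C+C+C=4C$, which is exactly the $(1,4C)$-quasi-isometry statement. The bijectivity is inherited from $f$ being a bijection (recall the definition of bilipschitz map used in the excerpt).

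For the ``in particular'' clause, an isometry with respect to $\rho$ satisfies the bilipschitz inequality with $C=0$, so the previous computation yields $d(x,y)=d(f(x),f(y))$ for every $x,y$. I do not anticipate any serious obstacle: the only subtle point is to handle the case $x=y$ (where $x\wedge y$ is not defined in the usual sense, but then $d(x,y)=0=d(f(x),f(y))$ trivially since $f$ is a bijection), which can be dispatched in one line before invoking the displayed identity.
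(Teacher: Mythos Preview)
Your proof is correct and follows essentially the same approach as the paper: both arguments use the identity $d(x,y)=2\ell(x\wedge y)-\ell(x)-\ell(y)$, the bound $|\ell(x)-\ell(f(x))|\le C$ from the preceding proposition, and the bound $|\ell(x\wedge y)-\ell(f(x)\wedge f(y))|\le C$ coming directly from the bilipschitz hypothesis on $\rho$. The only cosmetic difference is that you take the difference $d(x,y)-d(f(x),f(y))$ and bound its absolute value in one stroke, whereas the paper writes out the two one-sided inequalities separately.
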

 \begin{proof}
 Proposition~\ref{bilipAp} implies that
 $$
 -C\leq \ell(x)-\ell(f(x))\leq C.
 $$
 Moreover, \eqref{bi-lip} readily implies that
 $$
 -C\leq \ell(x\wedge y) - \ell(f(x)\wedge f(y))\leq C.
 $$ 
 On the one hand, for every $x,y\in T$,
 \begin{align*}
 d(x,y) &= 2\ell(x\wedge y)-\ell(x)-\ell(y) \\
 &\leq 2\ell(x\wedge y) +C-\ell(f(x))+C-\ell(f(y))+2\ell(f(x)\wedge f(y))-2\ell(f(x)\wedge f(y))\\
 &=4C +d(f(x),f(y)).
  \end{align*}
 On the other hand,
  \begin{align*}
  d(f(x),f(y)) &= 2\ell(f(x)\wedge f(y)) -\ell(f(x))-\ell(f(y))\\
  &\leq 2\ell(f(x)\wedge f(y)) +C-\ell(x)+C-\ell(y)+2\ell(x\wedge y)-2\ell(x\wedge y)\\
  &=4C +d(x,y).
 \end{align*}
 The statement about the isometries follows by setting $C=0$.
 \end{proof}

 The latter two propositions identify a class of quasi-isometric bijections on $(T,d,\mu)$ whose Jacobian is in $L^\infty$. In a subsequent paper, we plan to investigate the statement that quasisymmetric mappings on $(T,\rho,\mu)$ are a subset of $(L,C)$-quasi-isometric bijections on $(T,d,\mu)$, and that
 these quasi-isometries have Jacobian in $A_p(\mu)$ for some $p$.

 \bibliographystyle{plain}
{\small
\bibliography{references}}
\end{document}